\newcommand{\CC}{{\mathbb C}}
\newcommand{\bC}{{\mathbb C}}
\newcommand{\bE}{{\mathbb E}}
\newcommand{\cF}{{\mathcal F}}
\newcommand{\cI}{{\Omega}}
\newcommand{\bB}{{\mathbb B}}
\newcommand{\re}{\mathbb{R}}
\newcommand{\cpx}{\mathbb{C}}
\newcommand{\N}{\mathbb{N}}
\newcommand{\diag}{\mbox{diag}}
\newcommand{\lmd}{\lambda}
\newcommand{\eps}{\epsilon}
\def\af{\alpha}
\def\rank{\mbox{rank}}
\newcommand{\Sig}{\Sigma}
\newcommand{\reff}[1]{(\ref{#1})}
\newcommand{\mc}[1]{\mathcal{#1}}
\newcommand{\bdes}{\begin{description}}
    \newcommand{\edes}{\end{description}}
\newcommand{\bal}{\begin{align}}
\newcommand{\eal}{\end{align}}
\newcommand{\bnum}{\begin{enumerate}}
    \newcommand{\enum}{\end{enumerate}}
\newcommand{\bit}{\begin{itemize}}
    \newcommand{\eit}{\end{itemize}}
\newcommand{\bea}{\begin{eqnarray}}
\newcommand{\eea}{\end{eqnarray}}
\newcommand{\be}{\begin{equation}}
\newcommand{\ee}{\end{equation}}
\newcommand{\baray}{\begin{array}}
    \newcommand{\earay}{\end{array}}
\newcommand{\bsry}{\begin{subarray}}
    \newcommand{\esry}{\end{subarray}}
\newcommand{\bca}{\begin{cases}}
    \newcommand{\eca}{\end{cases}}
\newcommand{\bcen}{\begin{center}}
    \newcommand{\ecen}{\end{center}}
\newcommand{\bbm}{\begin{bmatrix}}
    \newcommand{\ebm}{\end{bmatrix}}
\newcommand{\bmx}{\begin{matrix}}
    \newcommand{\emx}{\end{matrix}}
\newcommand{\bpm}{\begin{pmatrix}}
    \newcommand{\epm}{\end{pmatrix}}
\newcommand{\btab}{\begin{tabular}}
    \newcommand{\etab}{\end{tabular}}
\newtheorem{theorem}{Theorem}[section]
\theoremstyle{definition}
\newtheorem{example}[theorem]{Example}
\newtheorem{alg}[theorem]{Algorithm}
\newtheorem{remark}[theorem]{Remark}
\numberwithin{equation}{section}
\begin{document}
\title[Gaussian Mixture Models and Incomplete Tensor Decompositions]
{Learning Diagonal Gaussian Mixture Models and Incomplete Tensor Decompositions}

\author[Bingni Guo]{Bingni~Guo}
\address{Bingni Guo, Jiawang Nie, and Zi Yang,
Department of Mathematics, University of California San Diego,
9500 Gilman Drive, La Jolla, CA, USA, 92093.}
\email{b8guo@ucsd.edu, njw@math.ucsd.edu, ziy109@ucsd.edu}

\author[Jiawang Nie]{Jiawang~Nie}

\author[Zi Yang]{Zi~Yang}

\subjclass[2010]{15A69,65F99,65K10}

\keywords{Gaussian model, tensor, decomposition,
generating polynomial, moments}

\begin{abstract}
This paper studies how to learn parameters in diagonal Gaussian mixture models.
The problem can be formulated as computing incomplete symmetric tensor decompositions.
We use generating polynomials
to compute incomplete symmetric tensor decompositions and approximations.
Then the tensor approximation method is used to learn diagonal Gaussian mixture models.
We also do the stability analysis.
When the first and third order moments are sufficiently accurate,
we show that the obtained parameters
for the Gaussian mixture models are also highly accurate.
Numerical experiments are also provided.
\end{abstract}

\maketitle

\section{Introduction}

A Gaussian mixture model consists of several component Gaussian distributions.
For given samples of a Gaussian mixture model, people often need to estimate parameters
for each component Gaussian distribution \cite{ge2018parameter,lee2005effective}.
Consider a Gaussian mixture model with $r$ components.
For each $i\in [r]:=\{1,\ldots,r\}$,
let $\omega_i$ be the positive probability for the
$i$th component Gaussian to appear in the mixture model.
We have each $\omega_i>0$ and $\sum_{i=1}^r\omega_i=1$.
Suppose the $i$th Gaussian distribution is
$\mathcal{N}(\mu_i,\Sigma_i)$, where $\mu_i\in\mathbb{R}^d$ is the expectation (or mean) and $\Sigma_i\in\mathbb{R}^{d\times d}$ is the covariance matrix.
Let $y\in\mathbb{R}^d$ be the random vector for the Gaussian mixture model and let
$y_1, \ldots, y_N$ be identically independent distributed (i.i.d)
samples from the mixture model.
Each $y_j$ is sampled from one of the $r$ component Gaussian distributions,
associated with a label $Z_j\in[r]$ indicating the component that it is sampled from.
The probability that a sample comes from the $i$th component is $\omega_i$.
When people observe only samples without labels,
the $Z_j$'s are called latent variables.
%the observed data are sampled from the marginal density
The density function for the random variable $y$ is
\begin{equation*}
f(y) \,:=\,   \sum_{i=1}^r\omega_i
\frac{1}{\sqrt{(2\pi)^d \det \Sigma_i} }
\exp\Big\{-\frac{1}{2}(y-\mu_i)^T \Sigma_i^{-1} (y-\mu_i)  \Big\},
\end{equation*}
where $\mu_i$ is the mean and $\Sigma_i$ is the covariance matrix
for the $i$th component.

Learning a Gaussian mixture model is to estimate the parameters
$\omega_i,\mu_i,\Sigma_i$ for each $i \in [r]$, from given samples of $y$.
The number of parameters in a covariance matrix grows quadratically
with respect to the dimension.
Due to the curse of dimensionality, the computation becomes very expensive
for large $d$ \cite{magdon2010approximating}.
Hence, diagonal covariance matrices are preferable in applications.
In this paper, we focus on learning Gaussian mixture models
with diagonal covariance matrices, i.e.,
\[
\Sigma_i \,= \, \text{diag}\big(\sigma_{i1}^2,\ldots,\sigma_{id}^2\big),
\quad i=1,\ldots, r.
\]
A natural approach for recovering the unknown parameters $\omega_i,\mu_i,\Sigma_i$
is the method of moments.
It estimates parameters by solving a system of multivariate polynomial equations,
from moments of the random vector $y$.
Directly solving polynomial systems may encounter non-existence or non-uniqueness of
statistically meaningful solutions \cite{wu2020optimal}.
However, for diagonal Gaussians, the third order moment tensor
can help us avoid these troubles.

Let $M_3 := \mathbb{E} (y \otimes y \otimes y)$
be the third order tensor of moments for $y$. One can write that
$y=\eta(z)+\zeta(z)$, where $z$ is a discrete random variable such that
$\mbox{Prob}(z=i)=\omega_i$, $\eta(i)=\mu_i \in \re^d$ and $\zeta(i)$
is the random variable $\zeta_i$ obeying the Gaussian distribution $\mathcal{N}(0,\Sigma_i)$.
Assume all $\Sigma_i$ are diagonal, then
\begin{multline}
M_3 =\sum_{i=1}^{r} \omega_i \mathbb{E}[(\eta(i)+\zeta_i)^{\otimes  3}] =
\sum_{i=1}^r \omega_i \Big(
    \mu_i\otimes\mu_i\otimes\mu_i+
    \mathbb{E}[\mu_i\otimes \zeta_i\otimes \zeta_i]+ \\
    \mathbb{E}[\zeta_i\otimes \mu_i\otimes \zeta_i]
    +\mathbb{E}[\zeta_i\otimes \zeta_i\otimes \mu_i] \Big).
\end{multline}
The second equality holds because $\zeta_i$ has zero mean and
\[
\mathbb{E}[\zeta_i\otimes \zeta_i\otimes \zeta_i]=
\mathbb{E}[\mu_i\otimes \mu_i\otimes \zeta_i]=
\mathbb{E}[\zeta_i\otimes \mu_i\otimes \mu_i]=
\mathbb{E}[\mu_i\otimes \zeta_i\otimes \mu_i]=0.
\]
The random variable $\zeta_i$ has diagonal covariance matrix,
so $\mathbb{E}[(\zeta_i)_j(\zeta_i)_l] = 0 $ for $j\neq l$. Therefore,
\begin{equation*}
\sum_{i=1}^r\omega_i\mathbb{E}[\mu_i\otimes \zeta_i\otimes \zeta_i]
= \sum_{i=1}^r\sum\limits_{j=1}^d
\omega_i\sigma_{ij}^2\mu_i\otimes e_j\otimes e_j=
\sum\limits_{j=1}^d a_j\otimes e_j\otimes e_j,
\end{equation*}
where the vectors $a_j$ are given by
\be \label{expr:aj}
a_j \coloneqq\sum_{i=1}^r\omega_i\sigma^2_{ij}\mu_i, \quad j=1,\ldots,d.
\ee
Similarly, we have
\[
\sum_{i=1}^r\omega_i\mathbb{E}[\zeta_i\otimes\mu_i\otimes \zeta_i]
=\sum_{j=1}^d e_j\otimes a_j\otimes e_j, \,
\sum_{i=1}^r\omega_i\mathbb{E}[\zeta_i\otimes \zeta_i\otimes\mu_i]
=\sum_{j=1}^d e_j\otimes e_j\otimes a_j.
\]
Therefore, we can express $M_3$ in terms of $\omega_i, \mu_i, \Sigma_i$ as
 \begin{equation}\label{M3-decomp}
 	M_3=\sum\limits_{i=1}^{r} \omega_i\mu_i\otimes\mu_i\otimes\mu_i
 + \sum\limits_{j=1}^d \Big( a_j\otimes e_j\otimes e_j+e_j\otimes a_j\otimes e_j
 +e_j\otimes e_j\otimes a_j \Big).
\end{equation}
We are particularly interested in the following third order symmetric tensor
\begin{equation} \label{def:tensor:F}
\cF := \sum_{i=1}^r \omega_i\mu_i\otimes\mu_i\otimes\mu_i.
\end{equation}
When the labels $i_1,i_2,i_3$ are distinct from each other, we have
\[
(M_3)_{i_1i_2i_3} \, = \, (\cF)_{i_1i_2i_3} \quad \text{for} \quad
i_1 \ne i_2 \ne i_3 \ne i_1.
\]
Denote the label set
\begin{equation}\label{eq:label set}
 \cI\:= \{ (i_1, i_2, i_3): \,  i_1 \ne i_2 \ne i_3 \ne i_1,\,
i_1,i_2,i_3 \, \text{are labels for} \, M_3 \}.
\end{equation}
The tensor $M_3$ can be estimated from the samplings for $y$,
so the entries $\cF_{i_1i_2i_3}$ with $(i_1,i_2,i_3) \in \cI$
can also be obtained from the estimation of $M_3$.
To recover the parameters $\omega_i, \mu_i$,
we first find the tensor decomposition for $\cF$,
from the partially given entries
$\cF_{i_1i_2i_3}$ with $(i_1,i_2,i_3) \in \cI$.
Once the parameters $\omega_i, \mu_i$ are known,
we can determine $\Sigma_i$ from the expressions of $a_j$ as in \reff{expr:aj}.

The above observation leads to the incomplete tensor decomposition problem.
For a third order symmetric tensor $\cF$ whose partial entries $\cF_{i_1i_2i_3}$
with $(i_1,i_2,i_3) \in \cI$ are known,
we are looking for vectors $p_1,\ldots, p_r$ such that
\be \label{iSTD:F}
\cF_{i_1i_2i_3} \, = \,
\Big( p_1^{\otimes 3} + \cdots + p_r^{\otimes 3}  \Big)_{i_1i_2i_3},
\quad \text{for all} \quad  (i_1,i_2,i_3) \in \cI.
\ee
The above is called an incomplete tensor decomposition for $\cF$.
To find such a tensor decomposition for $\cF$,
a straightforward approach is to do tensor completion:
first find unknown tensor entries $\cF_{i_1i_2i_3}$ with $(i_1,i_2,i_3) \not\in \cI$
such that the completed $\cF$ has low rank, and then compute the tensor decomposition for $\cF$.
However, there are serious disadvantages for this approach.
The theory for tensor completion or recovery, especially for symmetric tensors,
is premature. Low rank tensor completion or recovery is typically not
guaranteed by the currently existing methodology.
Most methods for tensor completion are based on convex relaxations,
e.g., the nuclear norm or trace minimization
\cite{FriLim18,MHWG,njwSTNN17,TanSha15,YuaZha16}.
These convex relaxations may not produce low rank completions \cite{NIPS20135077}.

In this paper, we propose a new method for determining incomplete tensor decompositions.
It is based on the generating polynomial method in \cite{nie2017generating}.
The label set $\cI$ consists of $(i_1,i_2,i_3)$ of distinct $i_1,i_2,i_3$.
We can still determine some generating polynomials, from the partially given tensor entries
$\cF_{i_1i_2i_3}$ with $(i_1,i_2,i_3) \in \cI$.
They can be used to get the incomplete tensor decomposition.
We show that this approach works very well when the rank $r$
is roughly not more than half of the dimension $d$.
Consequently, the parameters for the Gaussian mixture model can be recovered from
the incomplete tensor decomposition of $\cF$.

\medskip \noindent
{\bf Related Work} \,
Gaussian mixture models have broad applications in machine learning problems,
e.g., automatic speech recognition
\cite{karpagavalli2016review,povey2011subspace,reynolds1995speaker},
hyperspectral unmixing problem \cite{bioucas2012hyperspectral, ma2019hyperspectral},
background subtraction \cite{zivkovic2004improved,lee2005effective}
and anomaly detection \cite{veracini2009fully}.
They also have applications in social and biological sciences \cite{haas2006modelling,shekofteh2015gaussian,zhang2007probabilistic}.

There exist methods for estimating unknown parameters for Gaussian mixture models.
A popular method is the expectation-maximization (EM)
algorithm that iteratively approximates the maximum likelihood parameter estimation \cite{dempster1977maximum}.
This approach is widely used in applications,
while its convergence property
is not very reliable \cite{redner1984mixture}.
Dasgupta \cite{dasgupta1999learning} introduced a method
that first projects data to a randomly chosen
low-dimensional subspace and then use the empirical means and covariances
of low-dimensional clusters to estimate the parameters.
Later, Arora and Kannan \cite{sanjeev2001learning}
extended this idea to arbitrary Gaussians.
Vempala and Wong \cite{vempala2004spectral}
introduced the spectral technique to enhance the separation condition
by projecting data to principal components of the sample matrix
instead of selecting a random subspace.
For other subsequent work, we refer to
Dasgupta and Schulman \cite{dasgupta2013two},
Kannan et al. \cite{kannan2005spectral},
Achlioptas et al. \cite{achlioptas2005spectral},
Chaudhuri and Rao \cite{chaudhuri2008learning},
Brubaker and Vempala \cite{brubaker2008isotropic}
and Chaudhuri et al. \cite{chaudhuri2009multi}.

Another frequently used approach is based on moments, introduced by
Pearson~\cite{pearson1894contributions}.
Belkin and Sinha \cite{belkin2009learning} proposed
a learning algorithm for identical spherical Gaussians $(\Sigma_i=\sigma^2 I)$
with arbitrarily small separation between mean vectors.
It was also shown in \cite{kalai2010efficiently} that a mixture of two Gaussians
can be learned with provably minimal assumptions.
Hsu and Kakade \cite{hsu2013learning}
provided a learning algorithm for a mixture of spherical Gaussians, i.e.,
each covariance matrix is a multiple of the identity matrix.
This method is based on moments up to order three
and only assumes non-degeneracy instead of separations.
For general covariance matrices, Ge et al.~\cite{ge2015learning}
proposed a learning method when the dimension $d$ is sufficiently high.
More moment-based methods for general latent variable models can be found in \cite{latent}.

\medskip  \noindent
{\bf Contributions} \,
This paper proposes a new method for learning diagonal Gaussian mixture models,
based on samplings for the first and third order moments.
Let $y_1,\cdots,y_N$ be samples and let
$\{(\omega_i,\mu_i,\Sigma_i):i\in[r]\}$
be parameters of the diagonal Gaussian mixture model,
where each covariance matrix $\Sigma_i$ is diagonal.
We use the samples $y_1,\cdots,y_N$ to estimate the third order moment tensor $M_3$,
as well as the mean vector $M_1$.
We have seen that the tensor $M_3$ can be expressed as in \reff{M3-decomp}.

For the tensor $\cF$ in \reff{def:tensor:F}, we have
$\cF_{i_1i_2i_3} = (M_3)_{i_1i_2i_3}$ when the labels $i_1,i_2,i_3$
are distinct from each other. Other entries of $\cF$ are not known,
since the vectors $a_j$ are not available.
The $\cF$ is an incompletely given tensor.
We give a new method for computing the incomplete tensor decomposition of
$\cF$ when the rank $r$ is low
(roughly no more than half of the dimension $d$).
The tensor decomposition of $\cF$ is unique
under some genericity conditions \cite{COV17},
so it can be used to recover parameters $\omega_i,\mu_i$.
To compute the incomplete tensor decomposition of $\cF$,
we use the generating polynomial method in \cite{nie2017generating,nie2017low}.
We look for a special set of generating polynomials for $\cF$,
which can be obtained by solving linear least squares.
It only requires to use the known entries of $\cF$.
The common zeros of these generating polynomials
can be determined from eigenvalue decompositions. Under some genericity assumptions,
these common zeros can be used to get the incomplete tensor decomposition.
After this is done, the parameters $\omega_i,\mu_i$
can be recovered by solving linear systems.
The diagonal covariance matrices $\Sigma_i$ can also be
estimated by solving linear least squares.
The tensor $M_3$ is estimated from the samples $y_1, \ldots, y_N$.
Typically, the tensor entries $(M_3)_{i_1i_2i_3}$ and $\cF_{i_1i_2i_3}$,
are not precisely given. We also provide a stability analysis for this case,
showing that the estimated parameters are also accurate
when the entries $(M_3)_{i_1i_2i_3}$ have small errors.

The paper is organized as follows.
In Section~\ref{sc:pre}, we review some basic results
for symmetric tensor decompositions and generating polynomials.
In Section~\ref{sc:iSTD}, we give a new algorithm for computing
an incomplete tensor decomposition for $\cF$,
when only its subtensor $\cF_{\Omega}$ is known.
Section~\ref{sc:errSTD} gives the stability analysis
when there are errors for the subtensor $\cF_{\Omega}$.
Section~\ref{sc:learnDGM} gives the algorithm
for learning Gaussian mixture models.
Numerical experiments and applications are given in Section~\ref{sc:num}.
We make some conclusions and discussions in Section~\ref{sc:con}.

\section{Preliminary}
\label{sc:pre}

\subsection*{Notation}
Denote $\mathbb{N}$, $\mathbb{C}$ and $\mathbb{R}$ the set of
nonnegative integers, complex and real numbers respectively.
Denote the cardinality of a set $L$ as $|L|$. Denote by $e_i$
the $i$th standard unit basis vector, i.e., the $i$the entry of $e_i$ is one
and all others are zeros. For a complex number $c$, $\sqrt[n]{c}$ or $c^{1/n}$ denotes the principal $n$th root of $c$. For a complex vector $v$, $\text{Re}(v),\text{Im}(v)$ denotes the real part and imaginary part of $v$ respectively. A property is said to be generic if it is true in the whole space except a subset of zero Lebesgue measure.
The $\|\cdot\|$ denotes the Euclidean norm of a vector or the Frobenius norm of a matrix.
For a vector or matrix, the superscript $^T$ denotes the transpose
and $^H$ denotes the conjugate transpose. For $i,j \in \N$, $[i]$
denotes the set $\{1,2,\ldots,i\}$ and $[i,j]$ denotes the set $\{i,i+1,\ldots,j\}$ if $i \le j$.
For a vector $v$, $v_{i_1:i_2}$ denotes the vector $(v_{i_1},v_{i_1+1},\ldots,v_{i_2})$.
For a matrix $A$, denote by $A_{[i_1:i_2, j_1:j_2]}$ the submatrix of $A$
whose row labels are $i_1,i_1+1,\ldots,i_2$ and whose column labels are $j_1,j_1+1\ldots,j_2$.
For a tensor $\cF$, its subtensor $\cF_{[i_1:i_2,j_1:j_2,k_1:k_2]}$
is similarly defined.

Let $\textrm{S}^m(\bC^{d})$ (resp., $\textrm{S}^m(\re^{d})$) denote the space of
$m$th order symmetric tensors over the vector space $\bC^{d}$ (resp., $\re^{d}$).
For convenience of notation, the labels for tensors start with $0$.
A symmetric tensor $\mathcal{A} \in \textrm{S}^m(\bC^{n+1})$ is labelled as
\[
\mathcal{A} \, = \, (\mathcal{A}_{i_1...i_m} )_{ 0\le i_1, \ldots, i_m \le n} ,
\]
where the entry $\mathcal{A}_{i_1\ldots i_m}$ is invariant for all permutations of
$(i_1,\ldots,i_m)$. The Hilbert-Schmidt norm $\|\mathcal{A}\|$ is defined as
\begin{equation} \label{tensor:norm:HS}
\|\mathcal{A}\| \,:= \,
\Big(\sum\limits_{0\leq i_1,\ldots,i_m\leq n}|
\mathcal{A}_{i_1\ldots i_m}|^2\Big)^{1/2}.
\end{equation}
The norm of a subtensor $\| \mc{A}_{\Omega} \|$ is similarly defined.
For a vector $u:=(u_0,u_1,\ldots,u_n)\in \bC^{d}$, the tensor power
$u^{\otimes m}:=u\otimes \cdots \otimes u$, where $u$ is repeated $m$ times,
is defined such that
\[
(u^{\otimes m})_{i_1\ldots i_m} \, = \, u_{i_1}
    \times \cdots \times u_{i_m} .
\]
For a symmetric tensor $\mathcal{F}$, its symmetric rank is
\begin{equation*}
    \text{rank}_{\textrm{S}}(\mathcal{F})\coloneqq
    \text{min}\left\{r \, \mid \, \mathcal{F}=\sum\limits_{i=1}^r u_i^{\otimes m}\right\}.
\end{equation*}
There are other types of tensor ranks \cite{Land12,Lim13}.
In this paper, we only deal with symmetric tensors and symmetric ranks.
We refer to \cite{CLQY20,DeSLim08,Fri16,HLim13,Land12,Lim13}
for general work about tensors and their ranks.
For convenience, if $r=\text{rank}_{\textrm{S}}(\mathcal{F})$,
we call $\mathcal{F}$ a rank-$r$ tensor and
$\mathcal{F}=\sum\limits_{i=1}^r u_i^{\otimes m}$ is called a rank decomposition.

%For $\theta\coloneqq(\theta_0,\theta_1,\theta_2,\cdots,\theta_{n})\in\mathbb{N}^{n+1}$
%and $\tilde{x}\coloneqq (x_0,x_1,x_2,\cdots,x_{n})$, denote $$|\theta|\coloneqq\theta_0+\theta_1+\theta_2+\cdots+\theta_{n},\quad
%\tilde{x}^{\theta}\coloneqq x_0^{\theta_0}x_1^{\theta_1}x_2^{\theta_2}\cdots x_{n}^{\theta_{n}}.$$
%
For a power $\alpha\coloneqq(\alpha_1,\alpha_2,\cdots,\alpha_{n})\in\mathbb{N}^{n}$
and $x\coloneqq(x_1,x_2,\cdots,x_{n})$, denote
\[
|\alpha|\coloneqq\alpha_1+\alpha_2+\cdots+\alpha_{n},\quad
x^{\alpha}\coloneqq x_1^{\alpha_1}x_2^{\alpha_2}\cdots x_{n}^{\alpha_{n}}, \quad
x_0 :=1.
\]
The monomial power set of degree $m$ is denoted as
\[
\mathbb{N}^n_m \coloneqq \{\alpha=(\alpha_1,\alpha_2,\cdots,\alpha_{n})
\in\mathbb{N}^n:|\alpha|\leq m\}.
\]
For $\alpha  \in \mathbb{N}_3^n $, we can write that
$x^\alpha = x_{i_1}x_{i_2}x_{i_3} $ for some $0\le i_1,i_2,i_3 \le n$.
%
%For a third order symmetric tensor
%$\mathcal{F}\in \textrm{S}^3(\mathbb{C}^{d})$,
%we can equivalently label it such that
%\begin{equation}\label{tensor_index}
%\mathcal{F}_{\alpha}\coloneqq\mathcal{F}_{i_1,i_2,i_3}\quad\text{whenever}\quad
%x^{\alpha} = x_{i_1}x_{i_2}x_{i_3}.
%\end{equation}
%

Let $\mathbb{C}[x]_m$ be the space of all polynomials in $x$ with complex coefficients
and whose degrees are no more than $m$. For a cubic polynomial
$p\in\mathbb{C}[x]_3$ and $\mathcal{F}\in \textrm{S}^3(\mathbb{C}^{n+1})$,
we define the bilinear product (note that $x_0 = 1$)
\begin{equation}
    \langle p,\mathcal{F}\rangle=\sum\limits_{0\leq i_1,i_2,i_3\leq n}p_{i_1i_2i_3}\mathcal{F}_{i_1i_2i_3}
    \quad\text{for}\quad p \, =\sum\limits_{0\leq i_1,i_2,i_3\leq n}p_{i_1i_2i_3}x_{i_1}x_{i_2}x_{i_3},
\end{equation}
where $p_{i_1i_2i_3}$ are coefficients of $p$.
A polynomial $g\in\mathbb{C}[x]_3$ is called a {\it generating polynomial}
for a symmetric tensor $\mathcal{F} \in \textrm{S}^3(\bC^{n+1})$ if
\begin{equation}\label{gene_poly}
    \langle g\cdot x^{\beta}, \mathcal{F}\rangle=0\quad\forall\beta\in\mathbb{N}_{3-\text{deg}(g)}^n ,
\end{equation}
where $\text{deg}(g)$ denotes the degree of $g$ in $x$.
When the order is bigger than $3$, we refer to \cite{nie2017generating}
for the definition of generating polynomials. They can be used to compute
symmetric tensor decompositions and low rank approximations
\cite{nie2017generating,nie2017low}, which are closely related to
truncated moment problems and polynomial optimization
\cite{FiaNie12,NS09,nieloc,nie2015linear,Tight19}.
There are special versions of symmetric tensors and their decompositions
\cite{DNY20,NieYe19,NieYang20}.

\section{Incomplete Tensor Decomposition}
\label{sc:iSTD}

This section discusses how to compute an incomplete tensor decomposition
for a symmetric tensor $\cF \in \mathrm{S}^3(\cpx^d)$
when only its subtensor $\cF_\Omega$ is given,
for the label set $\Omega$ in \reff{eq:label set}.
For convenience of notation, the labels for $\cF$ begin with zeros
while a vector $u\in \bC^{d}$ is still labelled as $u:=(u_1,\ldots,u_d)$. We set
\[
n:=d-1, \quad x = (x_1, \ldots, x_n), \quad x_0 := 1.
\]
For a given rank $r$, denote the monomial sets
\begin{equation}  \label{basis}
\mathscr{B}_0\coloneqq\{x_{1},\cdots,x_{r}\},
%\quad \mathscr{B}_1\coloneqq x_{r+1}\mathscr{B}_0\cup\cdots\cup x_n\mathscr{B}_0,
\quad
\mathscr{B}_1=\{x_i x_j:\, i \in [r],\  j \in [r+1, n] \}.
\end{equation}
For a monomial power $\alpha \in \N^n$, by writing $\alpha\in\mathscr{B}_1$,
we mean that $x^{\alpha} \in \mathscr{B}_1$.
For each $\alpha \in \mathscr{B}_1$, one can write
$\alpha = e_i +e_j$ with $i \in [r],\  j \in [r+1, n]$.
Let $\CC^{[r] \times \mathscr{B}_1}$  denote the space of matrices
labelled by the pair $(k,\alpha)\in [r] \times \mathscr{B}_1$.
For each $\alpha = e_i + e_j\in\mathscr{B}_1$ and $G\in\CC^{[r] \times \mathscr{B}_1}$,
denote the quadratic polynomial in $x$
\begin{equation}\label{gp_def}
\varphi_{ij}[G](x) \, \coloneqq  \,
    \sum\limits_{k =1}^{r}G(k, e_i+e_j)x_k- x_i x_j.
\end{equation}

Suppose $r$ is the symmetric rank of $\cF$.
A matrix $G\in\CC^{[r] \times \mathscr{B}_1}$ is called a {\it generating matrix}
of $\cF$ if each $\varphi_{ij}[G](x)$, with $\alpha = e_i + e_j \in \mathscr{B}_1$,
is a generating polynomial of $\cF$.
Equivalently, $G$ is a generating matrix of $\cF$ if and only if
\begin{equation}\label{eq:generating}
  \langle x_t \varphi_{ij}[G](x) ,\cF \rangle
  =\sum_{k=1}^r G(k,e_i+e_j)\cF_{0kt}-\cF_{ijt}
  \,= \, 0, \quad t = 0, 1, \ldots, n,
\end{equation}
for all $i \in [r],\  j \in [r+1, n]$.
The notion {\it generating matrix} is motivated from the fact that
the entire tensor $\cF$ can be recursively determined by $G$
and its first $r$ entries (see \cite{nie2017generating}).
The existence and uniqueness of the generating matrix $G$ is shown as follows.

\begin{theorem} \label{thm:unique G}
Suppose $\cF$ has the decomposition
\begin{equation} \label{eq:decom_1 F}
\cF = \lambda_1\begin{bmatrix*}
            1 \\ u_1
        \end{bmatrix*}
        ^{\otimes 3}+\cdots+\lambda_r\begin{bmatrix*}
            1 \\ u_r
        \end{bmatrix*}^{\otimes 3} ,
\end{equation}
for vectors $u_i \in \CC^{n}$ and scalars $ 0\neq \lambda_i \in \cpx$.
If the subvectors $(u_1)_{1:r},\ldots,(u_r)_{1:r}$ are linearly independent,
then there exists a unique generating matrix
$G\in\CC^{[r] \times\mathscr{B}_1}$
satisfying \reff{eq:generating} for the tensor $\cF$.
\end{theorem}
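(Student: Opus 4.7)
The plan is to substitute the rank decomposition \reff{eq:decom_1 F} into the defining equation \reff{eq:generating} and show that, for each $\alpha = e_i+e_j\in\mathscr{B}_1$, it reduces to a square linear system in the unknowns $\{G(k,\alpha):k\in[r]\}$ whose coefficient matrix is invertible precisely under the hypothesis.

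First I would set $v_l := (1,u_l^T)^T \in \CC^d$ (indexed from $0$), so that \reff{eq:decom_1 F} reads $\cF_{abc} = \sum_{l=1}^r \lambda_l v_l(a)v_l(b)v_l(c)$. Plugging this into \reff{eq:generating} and using $v_l(0)=1$ together with $v_l(k) = (u_l)_k$ for $k\ge 1$, the equation becomes, for every $t=0,\ldots,n$,
\[
\sum_{l=1}^r \lambda_l v_l(t) \left(\sum_{k=1}^r G(k,e_i+e_j)(u_l)_k - (u_l)_i(u_l)_j\right) = 0.
\]
Writing $V = [v_1\ \cdots\ v_r] \in \CC^{d\times r}$, $D_\lambda := \mathrm{diag}(\lambda_1,\ldots,\lambda_r)$, and collecting the bracketed quantities into a vector $c\in\CC^r$, this is exactly $V D_\lambda c = 0$.

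Next I would argue that $V$ has full column rank. The submatrix $V_{[1:r,1:r]}$ has $l$-th column $(u_l)_{1:r}$, and these vectors are linearly independent by hypothesis, so this $r\times r$ block is invertible; hence $V$ has rank $r$. Since each $\lambda_l\ne 0$, the matrix $D_\lambda$ is invertible as well, so $V D_\lambda c = 0$ forces $c = 0$. The resulting $r$ scalar equations
\[
\sum_{k=1}^r G(k,e_i+e_j)(u_l)_k = (u_l)_i(u_l)_j, \qquad l=1,\ldots,r,
\]
form the linear system $M g_{ij} = b_{ij}$, where $M_{lk} := (u_l)_k$, $g_{ij}(k) := G(k,e_i+e_j)$, and $b_{ij}(l) := (u_l)_i(u_l)_j$. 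Because $M^T = V_{[1:r,1:r]}$ is invertible, this system has the unique solution $g_{ij} = M^{-1} b_{ij}$; running this over each $\alpha = e_i+e_j\in\mathscr{B}_1$ defines the unique generating matrix $G$ and simultaneously proves existence.

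The only place where the hypothesis enters is in the invertibility of $M$; everything else is direct substitution followed by decoupling the $t$-indexed family of equations. So the main conceptual point — not really an obstacle so much as the step to execute carefully — is recognizing that the hypothesized linear independence of the length-$r$ subvectors $(u_1)_{1:r},\ldots,(u_r)_{1:r}$ is \emph{exactly} the condition that propagates to full column rank of $V$, and then to unique solvability of the decoupled linear systems indexed by $\alpha \in \mathscr{B}_1$.
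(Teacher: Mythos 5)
Your proof is correct, and it reaches the conclusion by a different organization than the paper's. The paper splits the argument in two: existence is proved constructively by forming $V=[(u_1)_{1:r}\ \cdots\ (u_r)_{1:r}]$, setting $N_l:=V\,\diag\big((u_1)_l,\ldots,(u_r)_l\big)\,V^{-1}$ for $l=r+1,\ldots,n$, reading $G$ off the entries of these matrices, and then verifying \reff{eq:generating} by direct expansion; uniqueness is handled separately by observing that \reff{eq:generating} forces $F\cdot G(:,e_i+e_j)=g_{ij}$ for the matrix $F=(\cF_{0kt})$ of actual tensor entries, which has full column rank via the factorization $F=[u_1\ \cdots\ u_r]\,\diag(\lambda_1,\ldots,\lambda_r)\,V^T$. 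You instead prove that \reff{eq:generating} is \emph{equivalent} to the $r$ interpolation conditions $\sum_{k=1}^r G(k,e_i+e_j)(u_l)_k=(u_l)_i(u_l)_j$ for $l\in[r]$ --- the only nontrivial direction being your decoupling step, which rests on the same full-column-rank observation --- and then existence and uniqueness drop out simultaneously from the invertibility of the single $r\times r$ matrix $\big((u_l)_k\big)_{l,k}$. Your route is more economical and makes transparent that the hypothesis is used exactly once; the paper's route has the advantage that the matrices $N_l$ and their eigenstructure \reff{eigeq:NlGvi=uilvi}, which drive Algorithm~\ref{algo:iSTD}, are already exhibited inside the existence proof.
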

\begin{proof}
We first prove the existence.
For each $i=1, \ldots, r$, denote the vectors $v_i = (u_i)_{1:r}$.
Under the given assumption, $V := [v_1 \, \ldots \, v_r]$
is an invertible matrix. For each $l=r+1,\ldots,n$, let
\be \label{Nl:eigdecomp}
N_l \, := \, V \cdot \diag\big( (u_1)_{l},\ldots,(u_r)_{l} \big) \cdot  V^{-1}.
\ee
Then $N_l v_i = (u_i)_l v_i$ for $i=1,\ldots,r$, i.e.,
$N_l$ has eigenvalues
$(u_1)_{l},\ldots,(u_r)_{l} $ with corresponding eigenvectors
$(u_1)_{1:r},\ldots,(u_r)_{1:r}$.
We select $G\in\CC^{[r] \times\mathscr{B}_1}$ to be the matrix such that
\begin{equation}
 N_l =  \begin{bmatrix}
        G(1,e_1+e_l) & \cdots & G(r,e_1+e_l) \\
        \vdots & \ddots & \vdots \\
        G(1,e_r+e_l) & \cdots & G(r,e_r+e_l) \\
        \end{bmatrix},\, l=r+1,\ldots,n.
\end{equation}
For each $s =1,\ldots,r$ and $\alpha = e_i + e_j \in \bB_1$
with $i \in [r],\  j \in [r+1, n]$,
\[
\varphi_{ij}[G](u_s) =\sum_{k=1}^r G(k,e_i+e_j)(u_s)_k -
(u_s)_i (u_s)_j \, = \, 0 .
\]
For each $t = 1,\ldots, n$, it holds that
\begin{eqnarray*}
\langle x_t \varphi_{ij}[G](x) ,\cF \rangle
&=& \left\langle \sum_{k =1}^{r}G(k, e_i+e_j)  x_tx_k   -   x_t x_i x_j, \cF \right\rangle \\
&=& \left\langle \sum_{k =1}^{r}G(k, e_i+e_j)  x_tx_k -  x_t x_i x_j,
       \sum_{s=1}^r \lmd_s \bbm 1 \\ u_s \ebm^{\otimes 3} \right\rangle  \\
&=& \sum_{k =1}^{r}G(k, e_i+e_j)\sum_{s=1}^r \lambda_s (u_s)_t (u_s)_k -
        \sum_{s=1}^r \lambda_s (u_s)_t (u_s)_i (u_s)_j \\
&=& \sum_{s=1}^r \lambda_s (u_s)_t  \left(\sum\limits_{k =1}^{r}G(k, e_i+e_j)(u_s)_k -(u_s)_i (u_s)_j \right)  \\
&=& 0.
\end{eqnarray*}
When $t=0$, we can similarly get
\begin{eqnarray*}
  \langle \varphi_{ij}[G](x) ,\cF \rangle
  &=& {\left\langle \sum_{k =1}^{r}G(k, e_i+e_j)  x_k   -   x_i x_j, \cF \right\rangle} \\
  &=& \sum_{s=1}^r \lambda_s {\left(\sum\limits_{k =1}^{r}G(k, e_i+e_j)(u_s)_k -(u_s)_i (u_s)_j \right)} \\
  &=& 0.
  \end{eqnarray*}
Therefore, the matrix $G$ satisfies \reff{eq:generating}
and it is a generating matrix for $\cF$.

Second, we prove the uniqueness of such $G$.
For each $\alpha = e_i + e_j \in \mathscr{B}_1$, let
\[
F := \begin{bmatrix*}
            \cF_{011} &  \cdots & \cF_{0r1} \\
            \vdots & \ddots & \vdots \\
            \cF_{01n} &  \cdots & \cF_{0rn}
        \end{bmatrix*},\,
g_{ij} :=
        \begin{bmatrix*}
             \cF_{1ij} \\ \vdots \\
            \cF_{nij}
        \end{bmatrix*} .
\]
Since $G$ satisfies \reff{eq:generating}, we have $F \cdot G(:,e_i+e_j) = g_{ij}$.
The decomposition \reff{eq:decom_1 F} implies that
\[
F = \bbm u_1 & \cdots & u_r \ebm \cdot
\mbox{diag}(\lambda_1,\ldots,\lambda_r)
 \cdot \bbm v_1 & \cdots & v_r \ebm^T.
% \begin{bmatrix}
%            (u_1)_1 &  \cdots & (u_1)_r \\
%            \vdots & \ddots & \vdots \\
%            (u_r)_1 & \cdots & (u_r)_r
% \end{bmatrix}.
\]
The sets $\{v_1,\ldots,v_r \}$ and $\{ u_1,\ldots,u_r \}$
are both linearly independent. Since each $\lmd_i \ne 0$,
the matrix $F$ has full column rank.
Hence, the generating matrix $G$ satisfying $F \cdot G(:,e_i+e_j) = g_{ij}$
for all $i \in [r], j\in [r+1,n]$ is unique.
\end{proof}

The following is an example of generating matrices.

\begin{example}\label{ex-1}
Consider the tensor $\cF\in\mathtt{S}^3(\CC^6)$ that is given as
\[
\cF = 0.4\cdot(1,1,1,1,1,1)^{\otimes 3} +
0.6\cdot(1,-1,2,-1,2,3)^{\otimes 3}.
\]
The rank $r=2$, $\mathscr{B}_0=\{x_1,x_2\}$ and
$
\mathscr{B}_1 \,=\, \{x_1x_3,x_1x_4,x_1x_5,x_2x_3,x_2x_4,x_2x_5\} .
$	
We have the vectors
\[
u_1 =(1,1,1,1,1), \quad u_2 = (-1,2,-1,2,3), \quad
v_1 =(1,1), \quad v_2 = (-1,2).
\]
The matrices $N_3$, $N_4$, $N_5$ as in \reff{Nl:eigdecomp} are
\begin{align*}
N_3
%&=\begin{bmatrix}
%      G(1,e_1+e_3) & G(2,e_1+e_3)\\
%      G(1,e_2+e_3) & G(2,e_2+e_3)
%  \end{bmatrix}
&= \begin{bmatrix}1 & -1 \\ 1 & 2\end{bmatrix}
\begin{bmatrix}1 & 0 \\ 0 & -1\end{bmatrix}
\begin{bmatrix}1 & -1 \\ 1 & 2\end{bmatrix}^{-1}=
\begin{bmatrix}1/3 & 2/3 \\ 4/3 & -1/3\end{bmatrix}, \\
N_4
%&=\begin{bmatrix}
%    G(1,e_1+e_4) & G(2,e_1+e_4)\\
%    G(1,e_2+e_4) & G(2,e_2+e_4)
%\end{bmatrix}
&= \begin{bmatrix}1 & -1 \\ 1 & 2\end{bmatrix}
\begin{bmatrix}1 & 0 \\ 0 & 2\end{bmatrix}
\begin{bmatrix}1 & -1 \\ 1 & 2\end{bmatrix}^{-1}=
\begin{bmatrix}4/3 & -1/3 \\ -2/3 & 5/3\end{bmatrix}, \\
N_5
%&=\begin{bmatrix}
%    G(1,e_1+e_5) & G(2,e_1+e_5)\\
%    G(1,e_2+e_5) & G(2,e_2+e_5)
%\end{bmatrix}
&= \begin{bmatrix}1 & -1 \\ 1 & 2\end{bmatrix}
\begin{bmatrix}1 & 0 \\ 0 & 3\end{bmatrix}
\begin{bmatrix}1 & -1 \\ 1 & 2\end{bmatrix}^{-1}=
\begin{bmatrix}5/3 & -2/3 \\ -4/3 & 7/3\end{bmatrix} .
\end{align*}
The entries of the generating matrix $G$ are listed as below:
\begin{equation}\label{example-G}
 \begin{array}{c|rrrrrr}
k\backslash(i,j) &(1,3) &(1,4) & (1,5)& (2,3)&(2,4) & (2,5) \\ \hline
	1	&  1/3 & 4/3  & 5/3  & 4/3 & -2/3  &  -4/3\\
	2	&  2/3 & -1/3 & -2/3 & -1/3  & 5/3 & 7/3
	\end{array} .
\end{equation}
The generating polynomials in \reff{gp_def} are
	\begin{equation*}
	\begin{aligned}
		\varphi_{13}[G](x) &= \frac{1}{3}x_1+\frac{2}{3}x_2-x_1x_3,\\
		\varphi_{14}[G](x) &= \frac{4}{3}x_1-\frac{1}{3}x_2-x_1x_4,\\
		\varphi_{15}[G](x) &= \frac{5}{3}x_1-\frac{2}{3}x_2-x_1x_5,
	\end{aligned}
	\quad
	\begin{aligned}
		\varphi_{23}[G](x) &= \frac{4}{3}x_1-\frac{1}{3}x_2-x_2x_3,\\
		\varphi_{24}[G](x) &= -\frac{2}{3}x_1+\frac{5}{3}x_2-x_2x_4,\\
		\varphi_{25}[G](x) &= -\frac{4}{3}x_1+\frac{7}{3}x_2-x_2x_5.
	\end{aligned}
  \end{equation*}
Above generating polynomials can be written in the following form
\[
\begin{bmatrix}
     \varphi_{1j}[G](x)\\
     \varphi_{2j}[G](x)
\end{bmatrix} = N_j\begin{bmatrix}
    x_1 \\ x_2
  \end{bmatrix} - x_j \begin{bmatrix}
    x_1\\ x_2
  \end{bmatrix},\, \text{for } j=3,4,5.
\]
For $x$ to be a common zero of $\varphi_{1j}[G](x)$ and $\varphi_{2j}[G](x)$,
it requires that $(x_1,x_2)$ is an eigenvector of $N_j$
with the corresponding eigenvalue $x_j$.
\end{example}

\subsection{Computing the tensor decomposition}
\label{subsec: decom}

We show how to find an incomplete tensor decomposition \reff{eq:decom_1 F} for
$\cF$ when only its subtensor $\cF_{\Omega}$ is given,
where the label set $\Omega$ is as in \reff{eq:label set}.
Suppose that there exists the decomposition \reff{eq:decom_1 F} for $\cF$,
for vectors $u_i \in \CC^{n}$ and nonzero scalars $\lambda_i \in \bC $.
Assume the subvectors $(u_1)_{1:r},\ldots,(u_r)_{1:r} $ are linearly independent,
so there is a unique generating matrix $G$ for $\cF$, by Theorem~\ref{thm:unique G}.

%Note that only entries $\cF_{i_1i_2i_3}$ with $(i_1,i_2,i_3)\in \Omega$ are known.
For each $\alpha = e_i + e_j \in \mathscr{B}_1$
with $i \in [r], j \in [r+1,n]$ and for each
\[ l=r+1,\ldots,j-1,j+1,\ldots,n , \]
the generating matrix $G$ satisfies the equations
\begin{equation} \label{eq: generateT}
\left\langle x_l \left( \sum_{k =1}^{r}G(k, e_i+e_j)x_k - x_i x_j \right),\cF \right\rangle
 = \sum_{k =1}^{r}G(k, e_i+e_j) \cF_{0kl} - \cF_{ijl} = 0 .
\end{equation}
Let the matrix $A_{ij}[\cF]\in \CC^{(n-r-1)\times r} $
and the vector $b_{ij}[\cF]\in \CC^{n-r-1}$ be such that
\begin{equation}\label{eq:A,b}
    A_{ij}[\cF] := \begin{bmatrix}
        \cF_{0,1,r+1} & \cdots & \cF_{0,r,r+1} \\
        \vdots & \ddots & \vdots \\
        \cF_{0,1,j-1} & \cdots & \cF_{0,r,j-1} \\
        \cF_{0,1,j+1} & \cdots & \cF_{0,r,j+1} \\
        \vdots & \ddots & \vdots \\
        \cF_{0,1,n} & \cdots & \cF_{0,r,n}
    \end{bmatrix}, \quad
    b_{ij}[\cF] :=  \begin{bmatrix}
        \cF_{i,j,r+1}\\
        \vdots \\
        \cF_{i,j,j-1}\\
        \cF_{i,j,j+1}\\
        \vdots \\
        \cF_{i,j,n}
    \end{bmatrix} .
\end{equation}
To distinguish changes in the labels of tensor entries of $\cF$,
the commas are inserted to separate labeling numbers.

The equations in \reff{eq: generateT} can be equivalently written as
\begin{equation} \label{solve-G}
 A_{ij}[\cF] \cdot G(:, e_i+e_j) \,= \, b_{ij}[\cF].
\end{equation}
If the rank $r\le \frac{d}{2}-1$, then $n-r-1 = d-r-2\ge r$. Thus, the number of rows is not less than the number of columns for matrices $A_{ij}[\cF]$. If $A_{ij}[\cF]$ has linearly independent columns,
then \reff{solve-G} uniquely determines $G(:,\alpha)$.
For such a case, the matrix $G$ can be fully determined
by the linear system \reff{solve-G}.
Let $N_{r+1}(G),\ldots,N_m(G) \in \bC^{r\times r}$ be the matrices given as
\begin{equation}\label{eq:N}
N_l(G) =  \begin{bmatrix}
        G(1,e_1+e_l) & \cdots &G(r,e_1+e_l) \\
        \vdots & \ddots & \vdots \\
        G(1,e_r+e_l) & \cdots &G(r,e_r+e_l) \\
    \end{bmatrix},\, l=r+1,\ldots, n .
\end{equation}
As in the proof of Theorem \ref{thm:unique G}, one can see that
\be \label{eigeq:NlGvi=uilvi}
    N_l(G) \begin{bmatrix}
        (u_i)_1 \\ \vdots \\(u_i)_r
    \end{bmatrix} = (u_i)_l \cdot
    \begin{bmatrix}
        (u_i)_1 \\ \vdots \\(u_i)_r
    \end{bmatrix}, \, l=r+1,\ldots, n  .
\ee
The above is equivalent to the equations
\[
N_l(G)v_i \, = \, (w_i)_{l-r} \cdot v_i, \quad  l=r+1,\ldots, n,
\]
for the vectors ($i=1,\ldots,r$)
\be \label{vecs:viwi}
%v_i = ((u_i)_1,\ldots,(u_i)_r), \quad w_i =((u_i)_{r+1},\ldots,(u_i)_n).
v_i \,:=\, (u_i)_{1:r}, \quad w_i \,:=\, (u_i)_{r+1:n} .
\ee
Each $v_i$ is a common eigenvector of the matrices
$N_{r+1}(G),\ldots,N_n(G)$ and $(w_i)_{l-r}$
is the associated eigenvalue of $N_l(G)$.
These matrices may or may not have repeated eigenvalues.
Therefore, we select a generic vector $\xi\coloneqq(\xi_{r+1},\cdots,\xi_{n})$ and let
\begin{equation}\label{N_xi}
    N(\xi) \, \coloneqq  \, \xi_{r+1}N_{r+1}+\cdots+\xi_{n}N_{n} .
\end{equation}
The eigenvalues of $N(\xi)$ are $\xi^Tw_1, \ldots, \xi^Tw_r$.
When $w_1, \ldots, w_r$ are distinct from each other
and $\xi$ is generic, the matrix $N(\xi)$ does not have a repeated eigenvalue
and hence it has unique eigenvectors $v_1,\ldots,v_r$, up to scaling.
Let $\tilde{v}_1,\ldots,\tilde{v}_r$ be unit length eigenvectors of $N(\xi)$.
They are also common eigenvectors of $N_{r+1}(G)$, $\ldots$, $N_n(G)$.
For each $i = 1, \ldots,r$, let $\tilde{w}_i$ be the vector such that its $j$th entry
$(\tilde{w}_i)_j$ is the eigenvalue of $N_{j+r}(G)$,
associated to the eigenvector $\tilde{v}_i$, or equivalently,
\begin{equation}\label{w_i}
    \tilde{w}_i=(\tilde{v}_i^H N_{r+1}(G)\tilde{v}_i,\cdots,
    \tilde{v}_i^H N_n(G)\tilde{v}_i)\quad i=1,\ldots,r.
\end{equation}
Up to a permutation of $(\tilde{v}_1,\ldots, \tilde{v}_r)$,
there exist scalars $\gamma_i$ such that
\be \label{vi=tdvi:wi=tdwi}
v_i = \gamma_i \tilde{v}_i, \quad w_i = \tilde{w}_i .
\ee
The tensor decomposition of $\cF$ can also be written as
\[
\cF = \lambda_1
\begin{bmatrix}
        1 \\ \gamma_1 \tilde{v}_1 \\ \tilde{w}_1
\end{bmatrix}^{\otimes 3} + \cdots +\lambda_r
\begin{bmatrix}
        1 \\ \gamma_r \tilde{v}_r \\ \tilde{w}_r
\end{bmatrix}^{\otimes 3}.
\]
The scalars $\lambda_1,\cdots,\lambda_r$ and $\gamma_1,\cdots,\gamma_r$
satisfy the linear equations
\begin{equation*}
\baray{rcl}
    \lambda_1\gamma_1 \tilde{v}_1 \otimes\tilde{w}_1 +\cdots+
     {\lambda_r}{\gamma_r} \tilde{v}_r \otimes \tilde{w}_r
     &=& \cF_{[0,1:r,r+1:n]}, \\
    {\lambda_1}{\gamma_1^2}\tilde{v}_1\otimes \tilde{v}_1 \otimes \tilde{w}_1+\cdots+{\lambda_r}{\gamma_r^2}
     \tilde{v}_r\otimes \tilde{v}_r\otimes \tilde{w}_r
    &=&\cF_{[1:r,1:r,r+1:n]} .
\earay
\end{equation*}
Denote the label sets
\begin{equation}\label{label: J1 J2}
\boxed{
\begin{array}{l}
J_1 \coloneqq \big \{(0,i_1,i_2): \, i_1 \in [r], \, i_2 \in [r+1, n] \big \},\\
J_2 \coloneqq \big \{(i_1,i_2,i_3):\,
      i_1\neq i_2,\, i_1, i_2\in [r], i_3 \in [r+1, n] \big\}.
    \end{array}
    }
\end{equation}
To determine the scalars $\lmd_i, \gamma_i$,
we can solve the linear least squares
\begin{equation}\label{coef-1}
\min\limits_{ (\beta_1,\ldots,\beta_r) } \left\|\cF_{J_1}-
    \sum\limits_{i=1}^r \beta_i
  \cdot \tilde{v}_i  \otimes \tilde{w}_i  \right\|^2 ,
\end{equation}
\begin{equation}\label{coef-2}
    \min\limits_{(\theta_1,\ldots,\theta_r)} \left\|\cF_{J_2}-
     \sum\limits_{k=1}^r\theta_k %\text{suptr}(
     \cdot (\tilde{v}_k \otimes \tilde{v}_k \otimes \tilde{w}_i)_{J_2} \right\|^2 .
\end{equation}
Let $(\beta_1^*,\ldots,\beta_r^*)$, $(\theta_1^*,\ldots,\theta_r^*)$ be
minimizers of \reff{coef-1} and \reff{coef-2} respectively.
Then, for each $i=1,\ldots,r$, let
\be \label{lam:gam:k}
\lambda_i \, := \, (\beta_i^*)^2/\theta_i^*, \quad
\gamma_i \, := \, \theta_i^*/\beta_i^*.
\ee
For the vectors ($i=1,\ldots,r$)
\[
p_i := \sqrt[3]{\lambda_i}(1,\gamma_i \tilde{v}_i,\tilde{w}_i),
\]
the sum $p_1^{\otimes 3}+ \cdots +p_r^{\otimes 3}$
is a tensor decomposition for $\cF$.
This is justified in the following theorem.

\begin{theorem} \label{thm:GPworks}
Suppose the tensor $\cF$ has the decomposition as in \reff{eq:decom_1 F}.
Assume that the vectors $v_1,\ldots, v_r$ are linearly independent
and the vectors $w_1,\ldots,w_r$ are distinct from each other,
where $v_1,\ldots, v_r, w_1,\ldots,w_r$ are defined as in \reff{vecs:viwi}.
Let $\xi$ be a generically chosen coefficient vector and
let $p_1, \ldots, p_r$ be the vectors produced as above.
Then, the tensor decomposition
$\cF = p_1^{\otimes 3}+ \cdots +p_r^{\otimes 3}$
is unique.
\end{theorem}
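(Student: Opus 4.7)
The plan is to chain together four observations: (i) the generating matrix $G$ is recoverable from $\cF_\Omega$ via the linear system \reff{solve-G}; (ii) the matrices $N_l(G)$ share the common eigenbasis $v_1,\ldots,v_r$ with known eigenvalue structure; (iii) the generic combination $N(\xi)$ has simple spectrum, so its eigenvectors recover $v_i$ up to scale and a permutation; and (iv) the two linear least squares recover the scaling constants $\lambda_i,\gamma_i$ exactly. The uniqueness of the final decomposition then follows from a standard Kruskal-type argument using the hypothesis on $v_i$ and $w_i$.

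In more detail, I would first apply Theorem~\ref{thm:unique G}: the linear independence of $v_1,\ldots,v_r$ guarantees a unique generating matrix $G$ satisfying \reff{eq:generating}. The defining equations \reff{eq: generateT} only involve entries $\cF_{ijl}$ and $\cF_{0kl}$ with all three labels distinct, so $G$ is the unique solution of \reff{solve-G} whenever $A_{ij}[\cF]$ has full column rank. The full column rank follows by factoring $A_{ij}[\cF]$ through the decomposition \reff{eq:decom_1 F} and using linear independence of $\{v_i\}$ together with the fact that the $(r+1)$-to-$n$ rows of $[u_1\;\ldots\;u_r]$ are the vectors $w_1,\ldots,w_r$, which are distinct (so one can extract a nonsingular $r\times r$ minor after excluding row $j-r$).

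Next, by \reff{eigeq:NlGvi=uilvi}, each $v_i$ is a common eigenvector of $N_{r+1}(G),\ldots,N_n(G)$ with eigenvalue $(w_i)_{l-r}$ for $N_l(G)$. Hence $v_i$ is an eigenvector of $N(\xi)$ with eigenvalue $\xi^T w_i$. Since the $w_i$'s are pairwise distinct, the linear functionals $\xi\mapsto \xi^T(w_i-w_j)$ cut out a proper Zariski-closed set of $\xi$, so for generic $\xi$ the eigenvalues $\xi^T w_1,\ldots,\xi^T w_r$ are pairwise distinct and $N(\xi)$ has $r$ one-dimensional eigenspaces. Thus, up to a permutation, the unit eigenvectors $\tilde v_i$ satisfy $v_i=\gamma_i \tilde v_i$ for some $\gamma_i\in\cpx^\times$, and formula \reff{w_i} yields $\tilde w_i=w_i$.

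Finally I would verify that \reff{coef-1} and \reff{coef-2} admit exact zero residual with minimizers $\beta_i^*=\lambda_i\gamma_i$ and $\theta_i^*=\lambda_i\gamma_i^2$: this is a direct substitution using $\cF_{J_1}=\sum_i \lambda_i v_i\otimes w_i$ and $(\cF)_{J_2}=\sum_i \lambda_i (v_i\otimes v_i\otimes w_i)_{J_2}$ together with the linear independence of the rank-one tensors $\{\tilde v_i\otimes\tilde w_i\}$ and $\{(\tilde v_i\otimes \tilde v_i\otimes \tilde w_i)_{J_2}\}$. Plugging into \reff{lam:gam:k} gives back $\lambda_i$ and $\gamma_i$ exactly, whence $p_i=\sqrt[3]{\lambda_i}(1,\,u_i)$ and $\sum_i p_i^{\otimes 3}=\cF$. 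For the uniqueness claim, the hypotheses (linear independence of $v_i$'s plus distinctness of $w_i$'s) imply the Kruskal ranks of the factor matrix $[(1,u_i^T)^T]_{i=1}^r$ is $r$ in each mode (in fact, rows $1,\ldots,r$ of $[u_1\;\ldots\;u_r]$ are independent, and the remaining rows distinguish all columns), so Kruskal's uniqueness theorem for symmetric decompositions yields uniqueness of $\{p_1^{\otimes 3},\ldots,p_r^{\otimes 3}\}$ as a multiset.

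The main obstacle I expect is the bookkeeping for the full column rank of $A_{ij}[\cF]$ when $j$ is removed as a row index, and the careful handling of the cube-root/permutation ambiguity in passing from $(\lambda_i,\gamma_i,\tilde v_i,\tilde w_i)$ back to $p_i$; all other steps are bilinear algebra and a standard genericity argument.
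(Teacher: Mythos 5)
Your overall route is the same as the paper's: invoke Theorem~\ref{thm:unique G} to get the unique generating matrix, recover $G$ from \reff{solve-G}, diagonalize $N(\xi)$ for generic $\xi$ using distinctness of the $w_i$, recover $\lambda_i,\gamma_i$ from the two least-squares problems with zero residual, and conclude uniqueness of the decomposition from linear independence of the vectors $(1,u_i)$. Those steps are carried out correctly and match the paper's argument.

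The one genuine problem is your justification that $A_{ij}[\cF]$ has full column rank. Factoring $\cF_{0kl}=\sum_s\lambda_s(v_s)_k(w_s)_{l-r}$ gives $A_{ij}[\cF]=\widetilde W_j\,\mbox{diag}(\lambda_1,\ldots,\lambda_r)\,V^T$, where $\widetilde W_j$ is the matrix whose columns are the vectors $w_s$ with the entry indexed by $j$ deleted; since $V$ and $\mbox{diag}(\lambda_s)$ are invertible, full column rank of $A_{ij}[\cF]$ is equivalent to $\widetilde W_j$ having rank $r$. Pairwise distinctness of $w_1,\ldots,w_r$ does not imply this: take $w_s=s\cdot(1,0,\ldots,0)$, which are pairwise distinct while every $\widetilde W_j$ has rank $1$, so your claim that one can ``extract a nonsingular $r\times r$ minor after excluding row $j-r$'' fails. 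To be fair, the stated hypotheses of Theorem~\ref{thm:GPworks} do not actually imply solvability of \reff{solve-G}; the paper's own proof simply asserts that under the given assumptions \reff{solve-G} uniquely determines $G$, relying on the phrase ``the vectors produced as above'' to presuppose that the procedure succeeds, and the needed rank condition only appears explicitly later as condition~(iii) of Theorem~\ref{thm:decom error}. To make your argument airtight you should either add the hypothesis that the truncated vectors $\widetilde W_j$ have rank $r$ for every $j\in[r+1,n]$ (a generic condition when $r\le \frac{d}{2}-1$), or refrain from claiming to derive it from distinctness alone; the remainder of your proof is fine.
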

\begin{proof}
Since $v_1,\ldots, v_r$ are linearly independent,
the tensor decomposition \reff{eq:decom_1 F} is unique,
up to scalings and permutations. By Theorem~\ref{thm:unique G},
there is a unique generating matrix $G$ for $\cF$ satisfying \reff{eq:generating}.
Under the given assumptions, the equation \reff{solve-G} uniquely determines $G$.
Note that $\xi^T w_1, \ldots, \xi^T w_r$ are the eigenvalues of $N(\xi)$
and $v_1,\ldots, v_r$ are the corresponding eigenvectors.
When $\xi$ is generically chosen, the values of
$\xi^T w_1, \ldots, \xi^T w_r$ are distinct eigenvalues of $N(\xi)$.
So $N(\xi)$ has unique eigenvalue decompositions,
and hence \reff{vi=tdvi:wi=tdwi} must hold, up to a permutation of
$(v_1,\ldots, v_r)$.
Since the coefficient matrices have full column ranks,
the linear least squares problems have unique optimal solutions.
Up to a permutation of $p_1, \ldots, p_r$, it holds that
$
p_i = \sqrt[3]{\lambda_i}
\begin{bmatrix} 1 \\ u_i \end{bmatrix}.
$
Then, the conclusion follows readily.
\end{proof}

The following is the algorithm for computing an incomplete tensor decomposition
for $\cF$ when only its subtensor $\cF_{\Omega}$ is given.
\begin{alg}  \label{algo:iSTD}
(Incomplete symmetric tensor decompositions.)
\begin{itemize}

\item [Input:]
A third order symmetric subtensor ${\cF}_\cI$
and a rank $r =\text{rank}_S(\cF)\le \frac{d}{2}-1$.

\item [1.] Determine the matrix $G$ by solving \reff{solve-G}
for each $\alpha=e_i+e_j \in \bB_1$.

\item [2.]  Let $N(\xi)$ be the matrix as in \reff{N_xi},
for a randomly selected vector $\xi$.
Compute the unit length eigenvectors $\tilde{v}_1,\ldots,\tilde{v}_r$
of $N(\xi)$ and choose $\tilde{w}_i$ as in \reff{w_i}.

\item [3.] Solve the linear least squares \reff{coef-1} and \reff{coef-2}
to get the coefficients $\lambda_i, \gamma_i$ as in \reff{lam:gam:k}.

\item [4.] For each $i=1,\ldots,r$, let
$p_i := \sqrt[3]{ \lambda_i}(1,
\gamma_i \tilde{v}_i, \tilde{w}_i)$.

\item [Output:] The tensor decomposition
$\cF = (p_1)^{\otimes 3}+\cdots+(p_r)^{\otimes 3}$.

\end{itemize}
\end{alg}

The following is an example of applying Algorithm~\ref{algo:iSTD}.

\begin{example}
Consider the same tensor $\cF$ as in Example~\ref{ex-1}.
The monomial sets $\mathscr{B}_0$, $\mathscr{B}_1$ are the same.
The matrices $A_{ij}[\cF]$ and vectors $b_{ij}[\cF]$ are
\[
 A_{13}[\cF]=A_{23}[\cF]=
 \begin{bmatrix}
		-0.8 & 2.8\\ -1.4 & 4
\end{bmatrix}, \quad
b_{13}[\cF]=\begin{bmatrix}1.6\\2.2\end{bmatrix},
b_{23}[\cF]=\begin{bmatrix}-2\\-3.2\end{bmatrix},
\]
\[
A_{14}[\cF] = A_{24}[\cF]=
\begin{bmatrix}
		1 & -0.8\\ -1.4 & 4
\end{bmatrix}, \quad
b_{14}[\cF]=\begin{bmatrix}1.6\\-3.2\end{bmatrix},
b_{24}[\cF]=\begin{bmatrix}-2\\7.6\end{bmatrix},
\]
\[
A_{15}[\cF]=A_{25}[\cF]=
\begin{bmatrix}
		1 & -0.8\\ -0.8 & 2.8
\end{bmatrix}, \quad
b_{15}[\cF] = \begin{bmatrix}2.2\\-3.2\end{bmatrix},
b_{25}[\cF] = \begin{bmatrix}-3.2\\7.6\end{bmatrix}.
\]
Solve \reff{solve-G} to obtain $G$, which is same as in \reff{example-G}.
The matrices $N_3(G),N_4(G),N_5(G)$ are
\begin{align*}
N_3(G)=\begin{bmatrix}
			1/3 & 2/3\\4/3 & -1/3
		\end{bmatrix},\
N_4(G)=\begin{bmatrix}
			4/3 & -1/3\\-2/3 & 5/3
		\end{bmatrix},\
N_5(G)=\begin{bmatrix}
			5/3 & -2/3\\-4/3 & 7/3
		\end{bmatrix}.
\end{align*}
Choose a generic $\xi$, say, $\xi = (3,4,5)$, then
\[
N(\xi)  =
\begin{bmatrix}1/\sqrt{2} & -1/\sqrt{5} \\ 1/\sqrt{2} & 2/\sqrt{5}\end{bmatrix}
\begin{bmatrix} 12 & 0 \\ 0 & 20\end{bmatrix}
\begin{bmatrix}1/\sqrt{2} & -1/\sqrt{5} \\ 1/\sqrt{2} & 2/\sqrt{5}\end{bmatrix}^{-1}.
\]
The unit length eigenvectors are
\[
\tilde{v}_1 = (1/\sqrt{2},1/\sqrt{2}), \quad
\tilde{v}_2=(-1/\sqrt{5},2/\sqrt{5}) .
\]
As in \reff{w_i}, we get the vectors
\[
    w_1 = (1,1,1),\, w_2 = (-1,2,3).
\]
Solving \reff{coef-1} and \reff{coef-2}, we get the scalars
\[
\gamma_1=\sqrt{2}, \quad \gamma_2=\sqrt{5}, \quad
\lambda_1=0.4, \quad  \lambda_2 = 0.6.
\]
This produces the decomposition
$\cF=\lambda_1u_1^{\otimes 3}+\lambda_2u_2^{\otimes 3}$ for the vectors
\[u_1=(1,\gamma_1v_1,w_1)=(1,1,1,1,1,1), \quad u_2=(1,\gamma_2v_2,w_2)=(1,-1,2,-1,2,3). \]
\end{example}

\begin{remark} \label{remark:rank}
Algorithm~\ref{algo:iSTD} requires the value of $r$.
This is generally a hard question. In computational practice,
one can estimate the value of $r$ as follows.
Let $\mbox{Flat}(\cF)\in \bC^{(n+1) \times (n+1)^2}$ be the flattening matrix,
labelled by $(i, (j,k))$ such that
\[
\mbox{Flat}(\cF)_{i,(j,k)} \, = \, \cF_{ijk}
\]
for all $i,j,k =0,1,\ldots, n$.
The rank of $\text{Flat}(\cF)$ equals the rank of $\cF$
when the vectors $p_1,\ldots, p_r$ are linearly independent.
The rank of $\mbox{Flat}(\cF)$ is not available since only the subtensor
$(\cF)_\Omega$ is known. However, we can calculate the ranks of
submatrices of $(\cF)_\Omega$ whose entries are known.
If the tensor $\cF$ as in \reff{eq:decom_1 F} is such that
both the sets $\{v_1,\ldots,v_r\}$ and $\{w_1,\ldots,w_r\}$
are linearly independent, one can see that $\sum_{i=1}^r \lambda_i v_iw_i^T$
is a known submatrix of $\mbox{Flat}(\cF)$ whose rank is $r$.
This is generally the case if $r\le \frac{d}{2}-1$,
since $v_i$ has the length $r $ and $w_i$ has length $d-1-r \ge r$.
Therefore, the known submatrices of $\mbox{Flat}(\cF)$
are generally sufficient to estimate $\rank_S(\cF)$.
For instance, we consider the case $\cF\in\text{S}^3(\mathbb{C}^7)$.
The flattening matrix $\mbox{Flat}(\cF)$ is
\begin{equation}
\begin{bmatrix}
	\ast & \ast & \ast & \ast & \ast & \ast & \ast\\
	\ast & \ast & \cF_{120} & \cF_{130} & \cF_{140} & \cF_{150} & \cF_{160}\\
	\ast & \cF_{210} & \ast & \cF_{230} & \cF_{240} & \cF_{250} & \cF_{260}\\
	\ast & \cF_{310} & \cF_{320} & \ast & \cF_{340} & \cF_{350} & \cF_{360}\\
	\ast & \cF_{410} & \cF_{420} & \cF_{430} & \ast & \cF_{450} & \cF_{460}\\
	\ast & \cF_{510} & \cF_{520} & \cF_{530} & \cF_{540} & \ast & \cF_{560}\\
	\ast & \cF_{610} & \cF_{620} & \cF_{630} & \cF_{640} & \cF_{650} & \ast
\end{bmatrix},
\end{equation}
where each $\ast$ means that entry is not given.
The largest submatrices with known entries are
\[
\begin{bmatrix}
	\cF_{410} & \cF_{420} & \cF_{430}\\
	\cF_{510} & \cF_{520} & \cF_{530}\\
	\cF_{610} & \cF_{620} & \cF_{630}
\end{bmatrix}, \quad
\begin{bmatrix}
	\cF_{140} & \cF_{150} & \cF_{160}\\
	\cF_{240} & \cF_{250} & \cF_{260}\\
	\cF_{340} & \cF_{350} & \cF_{360}
\end{bmatrix}.
\]
The rank of above matrices generally equals $\rank_S(\cF)$
if $r\le \frac{d}{2}-1 = 2.5$.
\end{remark}

\section{Tensor approximations and stability analysis}
\label{sc:errSTD}

In some applications, we do not have the subtensor $\cF_\Omega$ exactly but
only have an approximation $\widehat{\cF}_\Omega$ for it.
The Algorithm~\ref{algo:iSTD} can still
provide a good rank-$r$ approximation for $\cF$
when it is applied to $\widehat{\cF}_\Omega$.
We define the matrix $A_{ij}[\widehat{\cF}]$ and the vector $b_{ij}[\widehat{\cF}]$
in the same way as in \reff{eq:A,b}, for each $\af = e_i+e_j \in \mathscr{B}_1$.
The generating matrix $G$ for $\cF$
can be approximated by solving the linear least squares
\begin{equation}\label{solve G ls}
    \min_{g \in \bC^r } \quad
     \| A_{ij}[\widehat{\cF}] \cdot g - b_{ij}[\widehat{\cF}]\|^2,
\end{equation}
for each $\alpha=e_i+e_j \in \bB_1$.
Let $\widehat{G}(:,e_i+e_j)$ be the optimizer of the above and
$\widehat{G}$ be the matrix consisting of all such $\widehat{G}(:,e_i+e_j)$.
Then $\widehat{G}$ is an approximation for $G$.
For each $l=r+1,\ldots,n$, define the matrix $N_l(\widehat{G})$
similarly as in \reff{eq:N}.
Choose a generic vector $\xi = (\xi_{r+1}, \ldots, \xi_{n})$ and let
\begin{equation}\label{N_xi^est}
\widehat{N}(\xi) \, \coloneqq \,\xi_{r+1} N_{r+1}(\widehat{G})
+\cdots +\xi_{n}N_{n}(\widehat{G}).
\end{equation}
The matrix $\widehat{N}(\xi)$ is an approximation for $N(\xi)$.
Let $\hat{v}_1,\ldots,\hat{v}_r$
be unit length eigenvectors of $\widehat{N}(\xi)$.
For $k =1,\ldots,r$, let
\begin{equation}\label{w_i^est}
 \hat{w}_k \, := \, \big(
 (\hat{v}_k)^H N_{r+1}(\widehat{G})\hat{v}_k,\ldots,
        (\hat{v}_k)^H N_n(\widehat{G}) \hat{v}_k \big) .
\end{equation}
For the label sets $J_1,J_2$ as in \reff{label: J1 J2},
the subtensors $\widehat{\cF}_{J_1},\widehat{\cF}_{J_2}$
are similarly defined like $\cF_{J_1},\cF_{J_2}$.
Consider the following linear least square problems
\begin{equation}\label{coef-1 ls}
    \min_{(\beta_1,\ldots,\beta_r)}
    \left\|\widehat{\cF}_{J_1} - \sum_{k=1}^r \beta_k
    \cdot \hat{v}_k\otimes \hat{w}_k \right\|^2 ,
\end{equation}
\begin{equation}\label{coef-2 ls}
    \min\limits_{(\theta_1,\ldots,\theta_r)} \left\|\widehat{\cF}_{J_2} -
    \sum\limits_{k=1}^r\theta_i
    \cdot (\hat{v}_k \otimes \hat{v}_k
     \otimes \hat{w}_k)_{J_2} \right\|^2 .
\end{equation}
Let $(\hat{\beta}_1, \ldots, \hat{\beta}_r)$
and $(\hat{\theta}_1, \ldots, \hat{\theta}_r)$ be their optimizers respectively.
For each $k=1,\ldots,r$, let
\be \label{hat:lam:gm:k}
\hat{\lmd}_k \,:= \, (\hat{\beta}_k)^2/\hat{\theta}_k, \quad
\hat{\gamma}_k \, := \, \hat{\theta_k}/\hat{\beta}_k .
\ee
This results in the tensor approximation
\[
\cF \approx ( \hat{p}_1)^{\otimes 3}+\cdots+( \hat{p}_r )^{\otimes 3},
\]
for the vectors $\hat{p}_k := \sqrt[3]{ \hat{\lambda}_k}(1,
\hat{\gamma}_k\hat{v}_k, \hat{w}_k)$.
The above may not give an optimal tensor approximation.
To get an improved one,
we can use $\hat{p}_1,\ldots,\hat{p}_r$ as starting points to
solve the following nonlinear optimization
\begin{equation}\label{prob:nls-p}
\min_{(q_1,\ldots,q_r)} \left\| \left(\sum_{k=1}^r (q_k)^{\otimes 3}-
\widehat{\cF}\right)_{\cI} \right\|^2.
\end{equation}
The minimizer of the optimization \reff{prob:nls-p} is denoted as
$(p_1^{*},\ldots,p_r^{*})$.

Summarizing the above, we have the following algorithm
for computing a tensor approximation.

\begin{alg}  \label{algo:decomposition}
(Incomplete symmetric tensor approximations.)
\begin{itemize}

\item [Input:]
A third order symmetric subtensor $\widehat{\cF}_\cI$
and a rank $r\le \frac{d}{2}-1$.

\item [1.] Find the matrix $\widehat{G}$ by solving \reff{solve G ls}
for each $\alpha=e_i+e_j \in \bB_1$.

\item [2.] Choose a generic vector and
let $\widehat{N}(\xi)$ be the matrix as in \reff{N_xi^est}.
Compute unit length eigenvectors $\hat{v}_1,\ldots,\hat{v}_r$
for $\widehat{N}(\xi)$ and define $\hat{w}_i$ in \reff{w_i^est}.

\item [3.] Solve the linear least squares \reff{coef-1 ls}, \reff{coef-2 ls}
to get the coefficients $\hat{\lambda}_i, \hat{\gamma}_i$.

\item [4.] For each $i=1,\ldots,r$, let
$\hat{p}_i := \sqrt[3]{ \hat{\lambda}_i}(1,
\hat{\gamma}_i \hat{v}_i,\hat{w}_i)$.
Then $(\hat{p}_1)^{\otimes 3}+\cdots+(\hat{p}_r)^{\otimes 3}$
is a tensor approximation for $\widehat{\cF}$.

\item [5.] Use $\hat{p}_1,\ldots, \hat{p}_r$ as starting points to
solve the nonlinear optimization \reff{prob:nls-p}
for an optimizer $(p_1^{*},\ldots,p_r^{*})$.

\item [Output:] The tensor approximation
$(p_1^{*})^{\otimes 3}+\cdots+(p_r^{*})^{\otimes 3}$
for $\widehat{\cF}$.

\end{itemize}
\end{alg}

When $\widehat{\cF}$ is close to $\cF$, Algorithm~\ref{algo:decomposition}
also produces a good rank-$r$ tensor approximation for $\cF$.
This is shown in the following.

\begin{theorem}\label{thm:decom error}
Suppose the tensor $\cF = (p_1)^{\otimes 3}+\cdots+(p_r)^{\otimes 3}$,
with $r \le \frac{d}{2}-1$, satisfies the following conditions:
\begin{itemize}
\item [(i)] The leading entry of each $p_i^{}$ is nonzero;

\item [(ii)] the subvectors $(p_1^{})_{2:r+1},\ldots,(p_r^{})_{2:r+1}$
are linearly independent;

\item [(iii)] the subvectors $(p_1)_{[r+2:j,j+2:d]},\ldots,(p_r)_{[r+2:j,j+2:d]}$
are linearly independent for each $j\in [r+1,n]$;

\item [(iv)] the eigenvalues of the matrix $N(\xi)$ in \reff{N_xi}
are distinct from each other.
\end{itemize}
Let $\hat{p}_i, p_i^{*}$ be the vectors produced by
Algorithm~\ref{algo:decomposition}.
If the distance $\epsilon := \|(\widehat{\cF}-{\cF})_{\cI} \|$ is small enough,
then there exist scalars $\hat{\tau}_i, \tau_i^{*}$ such that
\[
(\hat{\tau}_i)^3 = (\tau_i^{*})^3=1, \quad
\|\hat{\tau}_i \hat{p}_i- p_i\| = O(\epsilon), \quad
\|\tau_i^{*}{p}^{*}_i- p_i\| = O(\epsilon),
\]
up to a permutation of $(p_1, \ldots, p_r)$,
where the constants inside $O(\cdot)$ only depend on $\cF$
and the choice of $\xi$ in Algorithm~\ref{algo:decomposition}.
\end{theorem}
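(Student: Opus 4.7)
The plan is to propagate the input perturbation $\epsilon = \|(\widehat{\cF}-\cF)_\cI\|$ through each step of Algorithm~\ref{algo:decomposition} using standard first-order perturbation arguments, and then to treat the final nonlinear refinement with an implicit-function-type argument. The scaling convention used throughout the algorithm forces the leading coordinate of each $p_i$ to be $\sqrt[3]{\lambda_i}$, which is defined only up to a cube root of unity; this is precisely the source of the unavoidable factors $\hat{\tau}_i, \tau_i^*$ in the conclusion.

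First I would analyze the generating-matrix step. Condition (iii) ensures that each $A_{ij}[\cF]$ has linearly independent columns (indeed, the rows of $A_{ij}[\cF]$ corresponding to indices in $[r+2,j]\cup[j+2,n]$ form a submatrix $\sum_s \lambda_s (p_s)_{[r+2:j,j+2:n]}(v_s)^T w_{\text{th coord}}$, which is full-rank by (ii) and (iii)). Standard perturbation theory for least squares then yields $\|\widehat{G}(:,e_i+e_j) - G(:,e_i+e_j)\| = O(\epsilon)$ for every $\alpha = e_i+e_j \in \bB_1$, hence $\|\widehat{N}(\xi)-N(\xi)\| = O(\epsilon)$.

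Next I would apply eigenvalue/eigenvector perturbation to $\widehat{N}(\xi)$. By condition (iv), the eigenvalues $\xi^T w_1, \ldots, \xi^T w_r$ of $N(\xi)$ are simple, so there exist a permutation (which I absorb into the indexing) and unit-modulus scalars $c_1,\dots,c_r$ such that $\|\hat v_i - c_i v_i/\|v_i\|\| = O(\epsilon)$ and the corresponding eigenvalues of $\widehat{N}(\xi)$ differ from those of $N(\xi)$ by $O(\epsilon)$. Since the matrices $N_l(\widehat G)$ share these eigenvectors to leading order, the Rayleigh quotients defining $\hat w_i$ in \reff{w_i^est} satisfy $\|\hat w_i - w_i\| = O(\epsilon)$. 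The linear least-squares problems \reff{coef-1 ls}–\reff{coef-2 ls} have coefficient matrices whose columns are tensor products of perturbed vectors; by (ii) these are $O(\epsilon)$-perturbations of full-rank matrices, so their solutions satisfy $\hat\beta_i = \beta_i^* + O(\epsilon)$ and $\hat\theta_i = \theta_i^* + O(\epsilon)$. Using $\lambda_i\ne 0$, \reff{hat:lam:gm:k} gives $\hat\gamma_i = \gamma_i + O(\epsilon)$ and $\hat\lambda_i = \lambda_i + O(\epsilon)$. Choosing the principal cube root introduces a cube root of unity $\hat\tau_i$ (the $c_i$ factors get absorbed into $\hat\tau_i$ after the reweighting by $\hat\gamma_i\hat v_i$), and combining the estimates coordinate-by-coordinate for $\hat p_i = \sqrt[3]{\hat\lambda_i}(1,\hat\gamma_i\hat v_i,\hat w_i)$ yields $\|\hat\tau_i \hat p_i - p_i\| = O(\epsilon)$.

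The main obstacle is the nonlinear refinement step producing $p_i^*$, since \reff{prob:nls-p} is nonconvex and in principle its local minimizer near $\hat p_i$ need not coincide with the one near $p_i$. My plan is to invoke a standard implicit-function / Newton-convergence argument: at the unperturbed tensor $\cF$, the point $(p_1,\ldots,p_r)$ is a zero of the residual with a surjective Jacobian modulo the (finite) cube-root symmetry, because conditions (i)–(iii) imply uniqueness of the symmetric decomposition (Kruskal-type), so the Hessian of the squared residual on $\cI$ is positive definite on the complement of the symmetry orbit. For $\epsilon$ small enough, the initialization $\hat p_i$ lies inside the basin in which the least-squares objective is strongly convex along feasible directions, and the unique local minimizer $p_i^*$ in that basin satisfies $\|\tau_i^* p_i^* - p_i\| = O(\epsilon)$ for some cube root of unity $\tau_i^*$, with the constants depending only on $\cF$ and on $\xi$. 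Assembling the three perturbation estimates (least squares, eigen-decomposition, nonlinear refinement) gives the claimed bounds, up to the permutation of $(p_1,\ldots,p_r)$ absorbed in the second step.
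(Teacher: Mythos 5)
Your treatment of Steps 1--4 of Algorithm~\ref{algo:decomposition} is essentially the paper's argument: the factorization of $A_{ij}[\cF]$ via conditions (ii)--(iii) to get full column rank, least-squares perturbation for $\widehat{G}$, simple-eigenvalue perturbation for $\hat{v}_i,\hat{w}_i$ under (iv), and least-squares perturbation for $\hat\beta_i,\hat\theta_i$ followed by division (using $\lambda_i,\gamma_i\neq 0$) to get $\hat\lambda_i,\hat\gamma_i$. That part is fine and matches the paper step for step.

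The gap is in your handling of the nonlinear refinement \reff{prob:nls-p}. You assert that positive definiteness of the Hessian of the squared residual at $(p_1,\ldots,p_r)$ ``because conditions (i)--(iii) imply uniqueness of the symmetric decomposition.'' That inference is not valid: uniqueness of the decomposition is a global, set-theoretic statement, whereas positive definiteness of the Hessian at an exact zero of the residual is equivalent to injectivity of the differential of the map $(q_1,\ldots,q_r)\mapsto\big(\sum_k q_k^{\otimes 3}\big)_{\cI}$ at $(p_1,\ldots,p_r)$ --- an infinitesimal condition equivalent to finiteness of the condition number of the \emph{incomplete} decomposition problem (only the entries indexed by $\cI$ enter the residual, so this is strictly stronger than the analogous statement for the full tensor). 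This injectivity is plausible and presumably generic for $r\le \frac{d}{2}-1$, but it does not follow from (i)--(iv) without a separate argument, and your implicit-function/basin-of-strong-convexity step collapses without it. A second, smaller issue is that you would still need a quantitative radius for the basin to guarantee that the minimizer reached from $\hat{p}_i$ is the one near $p_i$.

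The paper avoids all of this with a more elementary bootstrap: since $\|\hat\tau_i\hat p_i-p_i\|=O(\epsilon)$, the starting point has residual $\|(\sum_i\hat p_i^{\otimes3}-\widehat{\cF})_{\cI}\|=O(\epsilon)$; optimality of $(p_1^*,\ldots,p_r^*)$ in \reff{prob:nls-p} then forces $\|(\cF^*-\widehat{\cF})_{\cI}\|=O(\epsilon)$ for $\cF^*:=\sum_i(p_i^*)^{\otimes3}$, hence $\|(\cF^*-\cF)_{\cI}\|=O(\epsilon)$ by the triangle inequality. Now $(\cF^*)_{\Omega}$ is itself an $O(\epsilon)$-perturbation of $\cF_{\Omega}$ whose exact Step-4 output is $(p_1^*,\ldots,p_r^*)$, so re-applying the already-established stability of Steps 1--4 yields $\|\tau_i^*p_i^*-p_i\|=O(\epsilon)$. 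This uses only first-order least-squares and eigenvector perturbation plus the comparison of residuals, and requires no second-order or identifiability-of-the-Jacobian hypothesis. If you want to keep your route, you must add and justify the injectivity of the restricted Jacobian as an explicit assumption or lemma; otherwise the paper's bootstrap is the cleaner fix.
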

\begin{proof}
The conditions (i)-(ii), by Theorem~\ref{thm:unique G},
imply that there is a unique generating matrix $G$ for $\cF$.
The matrix ${G}$ can be approximated by solving the linear least square problems
\reff{solve G ls}. Note that
\[
\|A_{ij}[\widehat{\cF}]-{A}_{ij}[\cF]\| \leq \eps, \quad
\|b_{ij}[\widehat{\cF}]-{b}_{ij}[\cF]\|\le \epsilon,
\]
for all $\alpha=e_i+e_j\in \mathscr{B}_1$. The matrix ${A}_{ij}[{\cF}]$ can be written as
\[
  {A}_{ij}[{\cF}] = [(p_1)_{[r+2:j,j+2:d]},\ldots,(p_r)_{[r+2:j,j+2:d]}]\cdot [(p_1^{})_{2:r+1},\ldots,(p_r^{})_{2:r+1}]^T.
\]
By the conditions (ii)-(iii), the matrix ${A}_{ij}[{\cF}]$ has full column rank for each $j\in [r+1,n]$ and hence the matrix ${A}_{ij}[\widehat{\cF}]$ has full column rank when $\epsilon$ is small enough.
Therefore, the linear least problems \reff{solve G ls}
have unique solutions and the solution $\widehat{G}$ satisfies that
\[
\|\widehat{G}-{G}\| = O(\epsilon),
\]
where $O(\epsilon)$ depends on $\cF$ (see \cite[Theorem~3.4]{Demmel}).
For each $j=r+1,\ldots,n$, $N_j(\widehat{G})$
is part of the generating matrix $\widehat{G}$, so
\[
 \|N_j(\widehat{G})-{N}_j(G)\|\le \|\widehat{G}-{G}\|
 = O(\epsilon), \quad j=r+1,\ldots,n.
\]
This implies that $\|\widehat{N}(\xi)-{N}(\xi)\|=O(\epsilon) $.
When $\epsilon$ is small enough, the matrix $\widehat{N}(\xi)$
does not have repeated eigenvalues, due to the condition (iv). Thus, the matrix $N(\xi)$ has a set of unit length eigenvectors $\tilde{v}_1,\ldots,\tilde{v}_r$
with eigenvalues $\tilde{w}_1,\ldots,\tilde{w}_r$ respectively, such that
\[
\|\hat{v}_i-\tilde{v}_i\| = O(\epsilon), \quad
\|\hat{w}_i-\tilde{w}_i\| = O(\epsilon) .
\]
This follows from Proposition 4.2.1 in \cite{chatelin2012}.
The constants inside the above $O(\cdot)$
depend only on $\cF$ and $\xi$.
The $\tilde{w}_1,\ldots,\tilde{w}_r $ are scalar multiples of linearly independent vectors $(p_1)_{r+2:d},\ldots,(p_r)_{r+2:d}$ respectively, so
$\tilde{w}_1,\ldots,\tilde{w}_r$ are linearly independent.
When $\epsilon$ is small, ${\hat{w}}_1,\ldots,{\hat{w}}_r$ are linearly independent as well.
The scalars $\hat{\lambda}_i \hat{\gamma}_i $ and $\hat{\lambda}_i(\hat{\gamma}_i)^2 $
are optimizers for the linear least square problems
\reff{coef-1 ls} and \reff{coef-2 ls}.
By Theorem~3.4 in \cite{Demmel}, we have
\[
\|\hat{\lambda}_i\hat{\gamma}_i - {\lambda}_i{\gamma}_i\| = O(\epsilon),\,
\|\hat{\lambda}_i(\hat{\gamma}_i)^2 - {\lambda}_i{\gamma}_i^2\| = O(\epsilon).
\]
The vector $p_i$ can be written as $p_i = \sqrt[3]{\lambda_i}(1,\gamma_i \tilde{v}_i,\tilde{w}_i)$, so we must have $\lambda_i,\gamma_i\neq 0$ due to the condition (ii). Thus, it holds that
\[
\|\hat{\lambda}_i - {\lambda}_i\| = O(\epsilon),\,
\|\hat{\gamma}_i - {\gamma}_i\| = O(\epsilon),
\]
where constants inside $O(\cdot)$ depend only on $\cF$ and $\xi$.
For the vectors $\tilde{p}_i:=\sqrt[3]{\lambda_i}(1,\gamma_i\tilde{v}_i,\tilde{w}_i)$,
we have $\cF = \sum_{i=1}^r \tilde{p}_i^{\otimes 3}$, by Theorem~\ref{thm:GPworks}.
Since $p_1,\ldots,p_r$ are linearly independent by the assumption,
the rank decomposition of $\cF$ is unique up to scaling and permutation.
There exist scalars $\hat{\tau}_i$ such that $(\hat{\tau}_i)^3=1$ and
$\hat{\tau}_i \tilde{p}_i = p_i$, up to a permutation of $p_1,\ldots,p_r$.
For $\hat{p}_i = \sqrt[3]{ \hat{\lambda}_i }(1, \hat{\gamma}_i \hat{v}_i ,\hat{w}_i)$,
we have $\| \hat{\tau}_i \hat{p}_i-p_i\|=O(\epsilon)$,
where the constants in $O(\cdot)$ only depend on $\cF$ and $\xi$.

Since $\| \hat{\tau}_i \hat{p}_i - p_i\|=O(\epsilon )$, we have
$\|(\sum_{i=1}^r (\hat{p}_i)^{\otimes 3}-\cF)_{\cI}\| = O(\epsilon)$.
The $(p_1^{*},\ldots,p_r^{*})$ is a minimizer of \reff{prob:nls-p}, so
\[
\left\|\left(\sum_{i=1}^r (p_i^{*})^{\otimes 3}-\widehat{\cF}\right)_{\cI}\right\| \le
\left\|\left(\sum_{i=1}^r ( \hat{p}_i)^{\otimes 3}-\widehat{\cF}\right)_{\cI}\right\| = O(\epsilon).
\]
For the tensor $\cF^{*}:=\sum_{i=1}^r (p_i^{*})^{\otimes 3}$, we get
\[
\|(\cF^{*}-\cF)_\cI\|\le\|(\cF^{*}-\widehat{\cF})_\cI\|+
\|(\widehat{\cF}-\cF)_\cI\|=O(\epsilon) .
\]
When Algorithm \ref{algo:decomposition} is applied to $(\cF^*)_{\Omega}$,
Step~4 will give the exact decomposition $\cF^{*}=\sum_{i=1}^r (p_i^{*})^{\otimes 3}$.
By repeating the previous argument, we can similarly show that
$\|{p}_i- \tau_i^{*}p_i^{*}\|=O(\epsilon)$ for some $\tau_i^{*}$
such that $(\tau_i^{*})^3=1$, where the constants in $O(\cdot)$ only depend on $\cF$ and $\xi$.
\end{proof}

\begin{remark}
For the special case that $\epsilon=0$,
Algorithm~\ref{algo:decomposition} is the same as Algorithm~\ref{algo:iSTD},
which produces the exact rank decomposition for $\cF$.
The conditions in Theorem \ref{thm:decom error} are satisfied
for generic vectors $p_1,\ldots,p_r$, since $r\le \frac{d}{2}-1$.
The constant in $O(\cdot)$ is not explicitly given in the proof.
It is related to the condition number $\kappa(\cF)$
for tensor decomposition \cite{breiding2018condition}.
It was shown by Breiding and Vannieuwenhoven \cite{breiding2018condition} that
\[
\sqrt{\sum\limits_{i=1}^r\|p_i^{\otimes 3}-\hat{p}_i^{\otimes 3}\|^2}\leq\kappa(\cF)\|\cF-\hat{\cF}\|+c\epsilon^2
\]
for some constant $c$.
The continuity of $\hat{G}$ in $\hat{\cF}$
is implicitly implied by the proof. Eigenvalues and unit eigenvectors of
$\widehat{N}(\xi)$ are continuous in $\hat{G}$.
Furthermore, $\hat{\lambda}_i,\hat{\gamma}_i$ are continuous
in the eigenvalues and unit eigenvectors.
All these functions are locally Lipschitz continuous.
The $\hat{p}_i$ is Lipschitz continuous with respect to $\hat{\cF}$,
in a neighborhood of $\cF$, which also implies an error bound for $\hat{p}_i$.
The tensors $(p_i^*)^{\otimes 3}$ are also locally Lipschitz continuous in
$\widehat{{\cF}}$, as illustrated in \cite{breiding2021condition}.
This also gives error bounds for decomposing vectors $p_i^*$.
We refer to \cite{breiding2018condition,breiding2021condition}
for more details about condition numbers of tensor decompositions.
\end{remark}

\begin{example}
We consider the same tensor $\cF$ as in Example~\ref{ex-1}.
The subtensor $(\cF)_\Omega$ is perturbed to $(\widehat{\cF})_\Omega$.
The perturbation is randomly generated from the Gaussian distribution
$\mathcal{N}(0, 0.01)$.
For neatness of the paper, we do not display $(\widehat{\cF})_\Omega$ here.
We use Algorithm \ref{algo:decomposition} to compute the incomplete tensor approximation.
The matrices $A_{ij}[\widehat{\cF}]$ and vectors $b_{ij}[\widehat{\cF}]$ are given as follows:
{\small
\begin{align*}
  A_{13}[\widehat{\cF}]&=A_{23}[\widehat{\cF}]=\begin{bmatrix}
    -0.8135 & 2.7988\\ -1.3697 & 4.0149 \end{bmatrix},
 &b_{13}[\widehat{\cF}]&=\begin{bmatrix}1.5980\\2.1879\end{bmatrix},\,
 &b_{23}[\widehat{\cF}]&=\begin{bmatrix}-2.0047\\-3.2027\end{bmatrix},\\
 A_{14}[\widehat{\cF}]&=A_{24}[\widehat{\cF}]=\begin{bmatrix}
  1.0277 & -0.8020\\-1.3697 & 4.0149
   \end{bmatrix},
   &b_{14}[\widehat{\cF}]&=\begin{bmatrix}1.5920\\-3.2013\end{bmatrix},\,
   &b_{24}[\widehat{\cF}]&=\begin{bmatrix}-2.0059\\7.5915\end{bmatrix},\\
  A_{15}[\widehat{\cF}]&=A_{25}[\widehat{\cF}]=\begin{bmatrix}
    1.0277 & -0.8020\\-0.8135 & 2.7988
   \end{bmatrix},
   &b_{15}[\widehat{\cF}]&=\begin{bmatrix}2.1993\\-3.2020 \end{bmatrix},\,
   &b_{25}[\widehat{\cF}]&=\begin{bmatrix}-3.1917\\7.6153\end{bmatrix}.\\
 \end{align*}
\noindent}The linear least square problems \reff{solve G ls}
are solved to obtain $\widehat{G}$ and $N_3(\widehat{G}),N_4
(\widehat{G}),N_5(\widehat{G})$, which are
\begin{gather*}
  N_3(\widehat{G})=\begin{bmatrix}
    0.5156 & 0.7208\\1.6132 &   -0.2474
  \end{bmatrix},\
  N_4(\widehat{G})=\begin{bmatrix}
    1.2631 & -0.3665\\-0.6489 &   1.6695
  \end{bmatrix},\\
  N_5(\widehat{G})=\begin{bmatrix}
    1.6131 & -0.6752\\-1.2704 &   2.3517
  \end{bmatrix}.
\end{gather*}
For $\xi=(3,4,5)$, the eigendecomposition of the matrix $\widehat{N}(\xi)$ in \reff{N_xi^est} is
\begin{align*}
  \widehat{N}(\xi) &= \begin{bmatrix} -0.7078 & 0.4470 \\ -0.7064 &  -0.8945\end{bmatrix}\begin{bmatrix}12.0343 & 0 \\ 0 & 20.0786\end{bmatrix}
  \begin{bmatrix}-0.7524 & 0.4499 \\ -0.6588 & -0.8931\end{bmatrix}^{-1}.
\end{align*}
It has eigenvectors $\hat{v}_1=(-0.7078,-0.7064), \hat{v}_2=(0.4470,-0.8945)$.
The vectors $\hat{w}_1,\hat{w}_2$ obtained as in \reff{w_i^est} are
\[
 \hat{w}_1 = (1.2021,0.9918,0.9899),\, \hat{w}_2 = (-1.0389,2.0145,3.0016
).
\]
By solving \reff{coef-1 ls} and \reff{coef-2 ls}, we got the scalars
\[
\hat{\gamma}_1=-1.1990,\, \hat{\gamma}_2=-2.1458, \qquad
\hat{\lambda}_1=0.4521,\, \hat{\lambda}_2=0.6232.
\]
Finally, we got the decomposition
$\hat{\lambda}_1 \hat{u}_1^{\otimes 3}+\hat{\lambda}_2 \hat{u}_2^{\otimes 3}$
with
\begin{gather*}
\hat{u}_1=(1,\hat{\gamma}_1\hat{v}_1,\hat{w}_1)=(1,0.8477,0.8479,1.2021,0.9918,0.9899),\\
\hat{u}_2=(1,\hat{\gamma}_2\hat{v}_2,\hat{w}_2)=(1,-0.9776,1.9102,-1.0389,2.0145,3.0016).
\end{gather*}
They are pretty close to the decomposition of $\cF$.
\end{example}

\section{Learning Diagonal Gaussian Mixture}
\label{sc:learnDGM}

We use the incomplete tensor decomposition or approximation method
to learn parameters for Gaussian mixture models.
The Algorithms \ref{algo:iSTD} and \ref{algo:decomposition}
can be applied to do that.

Let $y$ be the random variable of dimension $d$ for a Gaussian mixture model,
with $r$ components of Gaussian distribution parameters
$(\omega_i,\mu_i,\Sigma_i)$, $i=1,\ldots,r$.
We consider the case that $ r\le \frac{d}{2}-1$.
Let $y_1,\ldots,y_N$ be samples drawn from the Gaussian mixture model.
The sample average
\[
\widehat{M}_1 :=\frac{1}{N} ( y_1 + \cdots + y_N)
\]
is an estimation for the mean $M_1:=\bE[y]=\omega_1 \mu_1 + \cdots + \omega_r \mu_r$.
The symmetric tensor
\[
\widehat{M}_3 := \frac{1}{N} ( y_1^{\otimes 3} + \cdots + y_N^{\otimes 3} )
\]
is an estimation for the third order moment tensor $M_3:=\bE[y^{\otimes 3}]$.
Recall that
$
\mathcal{F} = \sum_{i=1}^r \omega_i \mu_i^{\otimes 3}.
$
When all the covariance matrices $\Sig_i$ are diagonal,
we have shown in \reff{M3-decomp} that
\[
    M_3 = \cF + \sum\limits_{j=1}^d(a_j\otimes e_j\otimes e_j
     +e_j\otimes a_j\otimes e_j+e_j\otimes e_j\otimes a_j).
\]
If the labels $i_1,i_2,i_3$ are distinct from each other,
$(M_3)_{i_1i_2i_3} = (\cF)_{i_1i_2i_3}$.
Recall the label set $\cI$ in \reff{eq:label set}. It holds that
\[
(M_3)_{\Omega} = (\cF)_{\Omega} .
\]
Note that $(\widehat{M}_3)_{\Omega}$ is only an approximation for
$(M_3)_{\Omega}$ and $(\cF)_{\Omega}$, due to sampling errors.
If the rank $r\le \frac{d}{2}-1$, we can apply Algorithm \ref{algo:decomposition}
with the input $(\widehat{M}_3)_{\Omega}$, to compute a rank-$r$ tensor approximation for $\cF$.
Suppose the tensor approximation produced by Algorithm~\ref{algo:decomposition} is
\[
\cF \approx (p_1^{*})^{\otimes 3} + \cdots + (p_r^{*})^{\otimes 3}.
\]
The computed $p_1^{*},\ldots,p_r^{*}$ may not be real vectors, even if
$\cF$ is real. When the error
$\epsilon:=\|(\cF-\widehat{M}_3)_\Omega\|$ is small,
by Theorem~\ref{thm:decom error}, we know
\[
\|\tau_i^{*}p_i^{*}-\sqrt[3]{\omega_i}\mu_i\| \,= \,O(\epsilon)
\]
where $(\tau_i^{*})^3=1$.
In computation, we can choose $\tau_i^{*}$ such that $(\tau_i^{*})^3=1$ and
the imaginary part vector $\text{Im}(\tau_i^{*}p_i^{*})$
has the smallest norm. It can be done by checking the imaginary part of
$\tau_i^{*}p_i^{*}$ one by one for
\[
\tau_i^{*} = 1, \,\, -\frac{1}{2}+\frac{\sqrt{-3}}{2}, \,\,
-\frac{1}{2}-\frac{\sqrt{-3}}{2} .
\]
Then we get the real vector
\[
\hat{q}_i  \,:= \, \text{Re}(\tau_i^{*}p_i^{*}).
\]
It is expected that
$\hat{q}_i  \approx \sqrt[3]{\omega_i} \mu_i$. Since
\[
M_1 = \omega_1 \mu_1 + \cdots + \omega_r \mu_r \approx
    \omega_1^{2/3} \hat{q}_1 + \cdots + \omega_r^{2/3} \hat{q}_r,
\]
the scalars $\omega_1^{2/3},\ldots,\omega_r^{2/3}$
can be obtained by solving the linear least squares
\begin{equation}   \label{solve-w}
\min\limits_{(\beta_1,\ldots,\beta_r)\in\mathbb{R}^r_+} \, \left\|
\widehat{M}_1- \sum_{i=1}^r  \beta_i \hat{q}_i  \right\|^2.
\end{equation}
Let $(\beta_1^*,\ldots,\beta_r^*)$ be an optimizer for the above,
then $\hat{\omega}_i := (\beta_i^*)^{3/2}$ is a good approximation for
$\omega_i$ and the vector
\[
\hat{\mu}_i \, := \,  \hat{q}_i / \sqrt[3]{ \hat{\omega}_i }
\]
is a good approximation for $\mu_i$. We may use
\[
\hat{\mu}_i, \quad \big( \sum_{j=1}^r \hat{\omega}_j \big)^{-1}  \hat{\omega}_i,
\quad i=1,\ldots, r
\]
as starting points to solve the nonlinear optimization
\begin{equation}\label{prob:nls-w mu}
\left\{ \begin{array}{cl}
\min\limits_{(\omega_1,\ldots,\omega_r, \mu_1,\ldots, \mu_r)} &
\|\sum_{i=1}^r \omega_i \mu_i-\widehat{M}_1\|^2 +
\|\sum_{i=1}^r \omega_i (\mu_i^{\otimes 3})_\cI-(\widehat{M}_3)_{\cI}\|^2\\
\text{subject to}  & \omega_1 + \cdots +\omega_r = 1, \,
     \omega_1,\ldots,\omega_r \ge 0,
\end{array} \right.
\end{equation}
for getting improved approximations. Suppose an optimizer of the above is
\[
(\omega_1^{*}, \ldots, \omega_r^{*}, \mu_1^{*},\ldots, \mu_r^{*}).
\]

Now we discuss how to estimate the diagonal covariance matrices $\Sigma_i$. Let
\begin{equation}\label{eq: Als}
\mathcal{A}:=M_3-\cF, \quad
\widehat{\mathcal{A}}  :=
\widehat{M}_3-( \hat{q}_1 )^{\otimes 3}-\cdots -( \hat{q}_r )^{\otimes 3} .
\end{equation}
By \reff{M3-decomp}, we know that
\begin{equation}\label{moment-difference}
 \mathcal{A}  = \sum\limits_{j=1}^d(a_j\otimes e_j\otimes e_j+e_j\otimes a_j\otimes e_j+e_j\otimes e_j\otimes a_j),
\end{equation}
where $a_j=\sum\limits_{i=1}^r\omega_i\sigma_{ij}^2\mu_i$ for $j=1,\cdots,d$.
The equation \reff{moment-difference} implies that
\begin{equation}\label{solve-aj}
    (a_j)_j=\frac{1}{3}\mathcal{A}_{jjj},\quad
    (a_j)_i=\mathcal{A}_{jij},
\end{equation}
for $i,j=1,\cdots,d$ and $i\ne j$.
So we choose vectors $\hat{a}_j \in \re^d$ such that
\begin{equation}\label{solve-aj^ls}
    (\hat{a}_j)_j=\frac{1}{3}\widehat{\mathcal{A}}_{jjj},\quad
    (\hat{a}_j)_i=\widehat{\mathcal{A}}_{jij}
    \quad \mbox{for} \quad i \ne j.
\end{equation}
Since $\hat{a}_j\approx \sum\limits_{i=1}^r\omega_i\sigma_{ij}^2\mu_i$,
the covariance matrices $\Sig_i = \mbox{diag}(\sigma_{i1}^2, \ldots, \sigma_{id}^2)$
can be estimated by solving the nonnegative linear least squares
($j=1,\ldots,d$)
\begin{equation}\label{solve-sigma}
\left\{ \baray{cl}
 \min\limits_{(\beta_{1j}, \ldots, \beta_{rj}) } &
      \left\|\hat{a}_j - \sum\limits_{i=1}^r
              \omega^{*}_i\mu^{*}_i \beta_{ij}\right\|^2 \\
 \mbox{subject to} & \beta_{1j} \ge 0,\ldots, \beta_{rj} \ge 0.
\earay \right.
\end{equation}
For each $j$, let $(\beta^*_{1j}, \ldots, \beta^*_{rj})$ be the optimizer for the above.
When $(\widehat{M}_3)_{\Omega}$ is close to $(M_3)_{\Omega}$,
it is expected that $\beta^*_{ij}$ is close to $(\sigma_{ij})^2$.
Therefore, we can estimate the covariance matrices $\Sig_i$ as follows
\begin{equation} \label{Sig_i^opt}
\Sig_i^{*}  \, := \, \mbox{diag}(\beta^*_{i1}, \ldots,  \beta^*_{id}), \quad
(\sigma_{ij}^{*})^2:=\beta^*_{ij}.
\end{equation}
The following is the algorithm for learning Gaussian mixture models.

\begin{alg}  \label{algo:no opt}
(Learning diagonal Gaussian mixture models.)
\begin{itemize}

\item [Input:] Samples $\{y_1,\ldots,y_N\} \subseteq \mathbb{R}^d$
drawn from a Gaussian mixture model and the number $r$
of component Gaussian distributions.

\item [Step~1.] Compute the sample averages
$\widehat{M}_1 := \frac{1}{N} \sum_{i=1}^N y_i$ and
$\widehat{M}_3 :=\dfrac{1}{N}\sum\limits_{i=1}^N y_i^{\otimes 3}$.

\item [Step~2.] Apply Algorithm \ref{algo:decomposition} to the subtensor
$(\widehat{\cF})_{\Omega} := (\widehat{M}_3)_{\Omega}$.
Let $(p_1^{*})^{\otimes 3}+\cdots +(p_r^{*})^{\otimes 3}$
be the obtained rank-$r$ tensor approximation for $\widehat{\cF}$.
For each $i=1,\ldots,r$, let $\hat{q}_i :=\text{Re}(\tau_ip_i^{*})$ where
$\tau_i$ is the cube root of $1$ that minimizes
the imaginary part vector norm $\|\text{Im}(\tau_i p_i^{*})\|$.

\item [Step~3.] Solve \reff{solve-w} to get $\hat{\omega}_1,\ldots,\hat{\omega}_r$ and $\hat{\mu}_i = q_i/\sqrt[3]{ \hat{\omega}_i},i=1,\ldots,r$.

\item [Step~4.] Use the above $\hat{\omega}_i$, $\hat{q}_i$ as initial points to
solve the nonlinear optimization \reff{prob:nls-w mu}
for the optimal $\omega_i^{*},\mu_i^{*},i=1,\ldots, r$.

\item [Step~5.] Get vectors $\hat{a}_1, \ldots, \hat{a}_d$ as in \reff{solve-aj^ls}.
Solve the optimization \reff{solve-sigma} to get optimizers $\beta_{ij}^*$
and then choose $\Sig_i^*$ as in \reff{Sig_i^opt}.

\item [Output:] Component Gaussian distribution parameters
$(\mu^{*}_i,\Sigma^{*}_i,\omega^{*}_i), i=1,\ldots,r$.

\end{itemize}
\end{alg}

The sample averages $\widehat{M}_1, \widehat{M}_3$ can typically be used as good
estimates for the true moments $M_1,M_3$.
When the value of $r$ is not known, it can be determined as in Remark~\ref{remark:rank}.
The performance of Algorithm~\ref{algo:no opt} is analyzed as follows.

\begin{theorem} \label{thm:error no opt}
Consider the $d$-dimensional diagonal Gaussian mixture model with parameters $\{(\omega_i,\mu_i,\Sigma_i):i\in[r]\}$ and $r\le \frac{d}{2}-1$.
Let $\{(\omega^{*}_i,\mu^{*}_i,\Sigma^{*}_i):i\in[r]\}$ be produced by
Algorithm \ref{algo:no opt}. If the distance
$\epsilon :=\max(\|M_3-\widehat{M}_3\|,\|M_1-\widehat{M}_1\|) $ is small enough
and the tensor $\cF=\sum_{i=1}^r\omega_i \mu_i^{\otimes 3}$ satisfies conditions of
Theorem~\ref{thm:decom error}, then
\[
\|\mu_i-\mu^{*}_i\| = O(\epsilon), \|\omega_i-\omega_i^{*}\| = O(\epsilon),
\| \Sigma_{i} - \Sigma^{*}_{i}\| = O(\epsilon),
\]
where the above constants inside $O(\cdot)$ only depend on parameters $\{(\omega_i,\mu_i,\Sigma_i):i\in[r]\}$ and the choice of
$\xi$ in Algorithm~\ref{algo:no opt}.
\end{theorem}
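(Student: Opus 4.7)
My plan is to track the propagation of the sampling error $\eps$ through each step of Algorithm~\ref{algo:no opt}, invoking Theorem~\ref{thm:decom error} as the key tensor-decomposition stability result and standard perturbation theory for the intervening least-squares problems. For Step~2, I would first observe that since $\cF = \sum_{i=1}^r (\omega_i^{1/3}\mu_i)^{\otimes 3}$ and $(M_3)_\Omega = \cF_\Omega$, we have $\|(\widehat{M}_3-\cF)_\Omega\| \le \|\widehat{M}_3-M_3\| \le \eps$. The stated hypotheses on $\{(\omega_i,\mu_i,\Sigma_i)\}$ translate to the assumptions of Theorem~\ref{thm:decom error} applied with $p_i := \omega_i^{1/3}\mu_i$, so invoking it produces cube roots $\tau_i^*$ with $\|\tau_i^* p_i^* - \omega_i^{1/3}\mu_i\| = O(\eps)$, up to a permutation. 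Since $\omega_i^{1/3}\mu_i$ is real, the choice of $\tau_i^*$ in Algorithm~\ref{algo:no opt} that minimizes $\|\text{Im}(\tau_i^* p_i^*)\|$ delivers $\hat{q}_i := \text{Re}(\tau_i^* p_i^*)$ with $\|\hat{q}_i - \omega_i^{1/3}\mu_i\| = O(\eps)$.

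For Steps~3 and 4, for small $\eps$ the vectors $\hat{q}_1,\ldots,\hat{q}_r$ inherit linear independence from the $\mu_i$, so the coefficient matrix of \reff{solve-w} is well-conditioned; the true scalars $\omega_i^{2/3}$ solve the noiseless analogue exactly, and standard least-squares perturbation theory then gives $|\hat{\omega}_i - \omega_i| = O(\eps)$ and $\|\hat{\mu}_i - \mu_i\| = O(\eps)$, using that each $\omega_i$ is bounded away from $0$. For Step~4 itself, the warm start $(\hat{\omega}_i,\hat{\mu}_i)$ already achieves objective value $O(\eps^2)$ in \reff{prob:nls-w mu}, so any minimizer $(\omega_i^*,\mu_i^*)$ does too, yielding $\|(\sum_i \omega_i^*(\mu_i^*)^{\otimes 3} - \cF)_\Omega\| = O(\eps)$ together with $\|\sum_i \omega_i^*\mu_i^* - M_1\| = O(\eps)$. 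I would then re-apply Theorem~\ref{thm:decom error} to $\sum_i \omega_i^*(\mu_i^*)^{\otimes 3}$ viewed as a perturbation of $\cF$ on $\Omega$: both $(\omega_i^*)^{1/3}\mu_i^*$ and $\omega_i^{1/3}\mu_i$ are real vectors with nonnegative scaling of $\mu_i^*,\mu_i$ respectively, and multiplying a nonzero real vector by a non-real cube root of unity produces an imaginary part of $\Omega(1)$ norm, so the ambiguity collapses to the identity and I obtain $\|(\omega_i^*)^{1/3}\mu_i^* - \omega_i^{1/3}\mu_i\| = O(\eps)$, whence $|\omega_i^* - \omega_i| = O(\eps)$ and $\|\mu_i^* - \mu_i\| = O(\eps)$.

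For Step~5, setting $\widehat{\mathcal{A}} := \widehat{M}_3 - \sum_i \hat{q}_i^{\otimes 3}$ and $\mathcal{A} := M_3 - \cF$, the triangle inequality combined with $\|\widehat{M}_3-M_3\| \le \eps$ and $\|\hat{q}_i - \omega_i^{1/3}\mu_i\| = O(\eps)$ gives $\|\widehat{\mathcal{A}} - \mathcal{A}\| = O(\eps)$, so the vectors $\hat{a}_j$ read off via \reff{solve-aj^ls} satisfy $\|\hat{a}_j - a_j\| = O(\eps)$. Since $a_j = \sum_i (\omega_i\mu_i)\sigma_{ij}^2$ and the coefficient matrix of \reff{solve-sigma} with columns $\omega_i^*\mu_i^*$ has full column rank for small $\eps$ (by linear independence of the $\mu_i$), the true values $\sigma_{ij}^2>0$ lie in the strict interior of the nonnegative feasible set, and a standard perturbation bound for nonnegative least squares at an interior optimum yields $|\beta_{ij}^* - \sigma_{ij}^2| = O(\eps)$, hence $\|\Sigma_i^* - \Sigma_i\| = O(\eps)$.

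The main obstacle will be Step~4: the optimization \reff{prob:nls-w mu} is non-convex and possesses symmetric local minima (for instance under permutations of the components), so convexity arguments are unavailable and there is no a priori reason the minimizer stays near the warm start. My route converts the local-optimality statement (small objective value) back into a tensor-closeness statement on $\Omega$ and lets Theorem~\ref{thm:decom error} do the work a second time; the remaining subtlety is verifying that the cube-root-of-unity ambiguity in the decomposition of $\sum_i \omega_i^*(\mu_i^*)^{\otimes 3}$ is pinned down to the identity by the real-valuedness of $\mu_i^*$ and the constraint $\omega_i^*\ge 0$, so that the correct correspondence between $\{(\omega_i^*,\mu_i^*)\}$ and $\{(\omega_i,\mu_i)\}$ emerges up to permutation and the constants inside $O(\cdot)$ depend only on the true parameters and $\xi$.
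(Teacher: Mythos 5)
Your proposal follows essentially the same route as the paper's proof: invoke Theorem~\ref{thm:decom error} to get $\|\hat{q}_i-\sqrt[3]{\omega_i}\mu_i\|=O(\epsilon)$, use least-squares perturbation bounds for the weights and means, bound the objective of \reff{prob:nls-w mu} at the minimizer by its value at the warm start and re-apply Theorem~\ref{thm:decom error} to $\sum_i\omega_i^*(\mu_i^*)^{\otimes 3}$ (using the $M_1$ equation to separate $\omega_i^*$ from $\mu_i^*$), and finish with a triangle inequality for $\widehat{\mathcal{A}}$ plus least-squares perturbation for \reff{solve-sigma}. Your explicit treatment of the cube-root-of-unity ambiguity in the second application of Theorem~\ref{thm:decom error} and of the inactive nonnegativity constraints in \reff{solve-sigma} is in fact slightly more careful than the paper, which passes over both points, but the argument is the same.
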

\begin{proof}
For the vectors $p_i:=\sqrt[3]{\omega_i}\mu_i$, we have
$\cF = \sum_{i=1}^r p_i^{\otimes 3}$. Since
\[
\|(\cF-\widehat{\cF})_{\cI}\| = \|(M_3-\widehat{M}_3)_{\cI}\| \le \epsilon
\]
and $\cF$ satisfies conditions of Theorem \ref{thm:decom error}, we know $\|\tau_i^{*}p^{*}_i-p_i\|=O(\epsilon)$ for some $(\tau_i^{*})^3=1$,
by Theorem~\ref{thm:decom error}. The constants inside $O(\epsilon)$
depend on parameters of the Gaussian model and $\xi$. Then, we have $\|\text{Im}(\tau_i^{*}p_i^{*})\|=O(\epsilon)$ since the vectors $p_i$ are real.
When $\epsilon$ is small enough, such $\tau_i^*$ is the $\tau$ in Step~2
of Algorithm \ref{algo:no opt} that minimizes $\|\text{Im}(\tau_ip_i^*)\|$,
so we have
\[
\| \hat{q}_i-p_i\|\le \|\tau_ip^{*}_i-p_i\|=O(\epsilon)
\]
where $\hat{q}_i=\text{Re}(\tau_ip_i^{*})$ is from Step~2.
The vectors $\hat{q}_1,\ldots, \hat{q}_r$ are linearly independent when
$\epsilon$ is small. Thus, the problem \reff{solve-w} has a unique solution and the weights
$\hat{\omega}_i $ can be found by solving \reff{solve-w}. Since
$\|M_1-\widehat{M}_1\|\le \epsilon $ and $\| \hat{q}_i-p_i\|=O(\epsilon)$,
we have $\|\omega_i - \hat{ \omega }_i \|=O(\epsilon)$ (see \cite[Theorem~3.4]{Demmel}).
The mean vectors $\hat{\mu}_i$ are obtained by
$\hat{\mu}_i = \hat{q}_i/\sqrt[3]{ \hat{\omega}_i }$,
so the approximation error is
\[
\|\mu_i - \hat{\mu}_i\|=\|{p}_i/\sqrt[3]{{\omega}_i }-
\hat{q}_i/\sqrt[3]{ \hat{\omega}_i }\| = O(\epsilon) .
\]
The constants inside the above $O(\epsilon)$ depend on parameters
of the Gaussian mixture model and $\xi$.

The problem \reff{prob:nls-w mu} is solved to obtain $\omega^{*}_i$
and $\mu_i^{*}$, so
\[
\left\|\widehat{M_1} - \sum_{i=3}^r \omega_i^{*} \mu_i^{*}\right\|+\left\|\widehat{\cF} -
\sum_{i=1}^r \omega_i^{*} (\mu_i^{*})^{\otimes 3} \right\| = O(\epsilon).
\]
Let $\cF^*:=\sum_{i=1}^r \omega_i^{*} (\mu_i^{*})^{\otimes 3}=\sum_{i=1}^r(\sqrt[3]{\omega_i^{*}}\mu_i^{*})^{\otimes 3}$, then
\[
\|\cF-\cF^*\| \le \|\cF-\hat{\cF}\|+\|\hat{\cF}-\cF^*\| = O(\epsilon).
\]
Theorem \ref{thm:decom error} implies
$\|p_i- \sqrt[3]{\omega_i^{*}}\mu_i^{*}\|=O(\epsilon)$.
In addition, we have
\[
\left\|\widehat{M_1} - \sum_{i=1}^r \omega_i^{*} \mu_i^{*}\right\|=\left\|\widehat{M_1} - \sum_{i=1}^r (\omega_i^{*})^{2/3} \sqrt[3]{\omega_i^{*}}\mu_i^{*}\right\| = O(\epsilon).
\]
The first order moment is $M_1= \sum_{i=1}^r (\omega_i)^{2/3} p_i$.
Since $\|M_1-\hat{M}_1\|=O(\epsilon)$ and $\|p_i- \sqrt[3]{\omega_i^{*}}\mu_i^{*}\|=O(\epsilon)$, it holds that $\|\omega_i^{2/3}-(\omega_i^{*})^{2/3}\|=O(\epsilon)$ by \cite[Theorem~3.4]{Demmel}.
This implies that $\|\omega_i-\omega_i^{*}\|=O(\epsilon)$, so
\[
\|\mu_i-\mu_i^{*}\|=\|p_i/\sqrt[3]{\omega_i}-
(\sqrt[3]{\omega_i^{*}}\mu_i^{*})/\sqrt[3]{\omega_i^{*}}\|=O(\epsilon).
\]
The constants inside the above $O(\cdot)$ only depend on parameters
$\{(\omega_i,\mu_i,\Sigma_i):i\in[r]\}$ and $\xi$.

The covariance matrices $\Sigma_i$ are recovered by solving the linear least squares
\reff{solve-sigma}. In the least square problems, it holds that
$\|\omega_i\mu_i-\omega_i^{*}\mu_i^{*}\|=O(\epsilon)$ and
\[
  \|\mathcal{A}-\widehat{\mathcal{A}}\|\le \|M_3-\widehat{M}_3\|+\|\cF-\sum_{i=1}^r \hat{q}_i^{\otimes 3}\|=O(\epsilon),
\]
where tensors $\mathcal{A},\widehat{\mathcal{A}}$ are defined in \reff{eq: Als}.
When the error $\epsilon$ is small,
the vectors $\omega_i^{*}\mu_1^{*},\ldots,\omega_i^{*}\mu_r^{*}$
are linearly independent and hence \reff{solve-sigma} has a unique solution for each $j$.
By \cite[Theorem~3.4]{Demmel}, we have
\[
\| (\sigma_{ij} )^2 - (\sigma_{ij}^{*})^2\| = O(\epsilon).
\]
It implies that $\|\Sigma_i-\Sigma^{*}_i \| = O(\epsilon)$,
where the constants inside $O(\cdot)$ only depend on parameters
$\{(\omega_i,\mu_i,\Sigma_i):i\in[r]\}$ and $\xi$.
\end{proof}

\section{Numerical Simulations}
\label{sc:num}

This section gives numerical experiments for our proposed methods.
The computation is implemented in {\tt MATLAB} R2019b, on an Alienware personal computer with Intel(R)Core(TM)i7-9700K CPU@3.60GHz and RAM 16.0G.
The {\tt MATLAB} function \texttt{lsqnonlin} is used to solve \reff{prob:nls-p} in Algorithm \ref{algo:decomposition} and the MATLAB function \texttt{fmincon}
is used to solve \reff{prob:nls-w mu} in Algorithm \ref{algo:no opt}.
We compare our method with the classical EM algorithm,
which is implemented by the {\tt MATLAB} function \texttt{fitgmdist}
(\texttt{MaxIter} is set to be $100$ and \texttt{RegularizationValue}
is set to be $0.001$).

First, we show the performance of Algorithm \ref{algo:decomposition}
for computing incomplete symmetric tensor approximations.
For a range of dimension $d$ and rank $r$, we get the tensor
$\cF = (p_1)^{\otimes 3}+\cdots+(p_r)^{\otimes 3}$,
where each $p_i$ is randomly generated
according to the Gaussian distribution in {\tt MATLAB}.
Then, we apply the perturbation
$(\widehat{\cF})_\Omega = (\cF)_\Omega + \mc{E}_{\Omega}$,
where $\mc{E}$ is a randomly generated tensor,
also according to the Gaussian distribution in {\tt MATLAB},
with the norm $\| \mc{E}_{\omega} \|_{\Omega} = \eps$.
After that, Algorithm \ref{algo:decomposition} is applied to the subtensor
$(\widehat{\cF})_\Omega$ to find the rank-$r$ tensor approximation.
The approximation quality is measured by the absolute error and the relative error
\[
\text{abs-error} \coloneqq \|(\cF^{\ast}-\cF)_{\Omega}\|, \quad
\text{rel-error} \coloneqq \frac{\|(\cF^{\ast}-\widehat{\cF})_{\Omega}\|}
{\|(\cF-\widehat{\cF})_{\Omega}\|},
\]
where $\cF^{\ast}$ is the output of Algorithm \ref{algo:decomposition}.
For each case of $(d,r,\eps)$, we generate $100$ random instances.
The min, average, and max relative errors for each dimension $d$ and rank $r$
are reported in the Table~\ref{tensor-result}.
The results show that Algorithm~\ref{algo:decomposition}
performs very well for computing tensor approximations.
{\footnotesize
\begin{table}[htbp]
  \centering
  \caption{The performance of Algorithm~\ref{algo:decomposition}}
  \label{tensor-result}
    \begin{tabular}{ccccccccccc}
    \toprule
     &  & & &$\text{rel-error}$ & &  & &$\text{abs-error}$ & \\
    \cmidrule{4-6} \cmidrule{8-10}
    $d$ & $r$ & $\eps$ & min & average & max & & min & average & max & time\\
    \midrule
    \multirow{4}{*}{20} & 3 & 0.1 &  0.9610  & 0.9731  &  0.9835 & & 0.0141 & 0.0268 & 0.0556 & 0.2687\\
     \cmidrule{4-6} \cmidrule{8-10}
     & 5 & 0.01 & 0.9634 & 0.9700  &  0.9742 & & 0.0019 & 0.0032 & 0.0068 & 0.2392\\
     \cmidrule{4-6} \cmidrule{8-10}
     & 7 & 0.001 & 0.9148  & 0.9373  & 0.9525 & & $2.3\cdot 10^{-4}$ & $3.8\cdot 10^{-4}$ & $6.6\cdot 10^{-4}$ & 0.2638\\
    \midrule
    \multirow{4}{*}{30} & 4 & 0.1 &  0.9816  &  0.9854 & 0.9890 & & 0.0094 & 0.0174 & 0.0533 & 0.4386\\
     \cmidrule{4-6} \cmidrule{8-10}
     & 8 & 0.01 & 0.9634 & 0.9700 & 0.9742 & & 0.0015 & 0.0024 & 0.0060 & 0.7957\\
     \cmidrule{4-6} \cmidrule{8-10}
     & 11 & 0.001 & 0.9501 & 0.9587 & 0.9667 & & $1.8\cdot 10^{-4}$ & $3.0\cdot 10^{-4}$ & $5.7\cdot 10^{-4}$ & 0.8954\\
    \midrule
    \multirow{4}{*}{40} & 6 & 0.1 & 0.9853 &   0.9877  &  0.9904  & & 0.0099 & 0.0146 & 0.0359 & 1.7779\\
     \cmidrule{4-6} \cmidrule{8-10}
     & 10 & 0.01 & 0.9761 & 0.9795 &  0.9820  & & 0.0013 & 0.0020 & 0.0045 & 2.6454\\
     \cmidrule{4-6} \cmidrule{8-10}
     & 15 & 0.001 &0.9653  &  0.9690  &  0.9734 & & $1.7\cdot 10^{-4}$ & $2.6\cdot 10^{-4}$ & $4.8\cdot 10^{-4}$ & 3.6785\\
     \midrule
     \multirow{4}{*}{50} & 7 & 0.1 & 0.9887 & 0.9911 & 0.9925 & & 0.0081 & 0.0128 & 0.0294 & 4.9774\\
     \cmidrule{4-6} \cmidrule{8-10}
     & 13 & 0.01 & 0.9812 & 0.9831 &   0.9854 & & 0.0011 & 0.0018 & 0.0045 & 8.7655\\
     \cmidrule{4-6} \cmidrule{8-10}
     & 18 & 0.001 & 0.9739 & 0.9767 & 0.9792 & & $1.5\cdot 10^{-4}$ & $2.2\cdot 10^{-4}$ & $4.1\cdot 10^{-4}$ & 11.6248\\
     \bottomrule
    \end{tabular}
\end{table}
}

Second, we explore the performance of Algorithm \ref{algo:no opt}
for learning diagonal Gaussian mixture models.
We compare it with the classical EM algorithm, for which
the MATLAB function \texttt{fitgmdist} is used
(\texttt{MaxIter} is set to be 100 and \texttt{RegularizationValue}
is set to be $0.0001$).
The dimensions $d=20,30,40,50,60$ are tested.
Three values of $r$ are tested for each case of $d$.
We generate $100$ random instances of $\{(\omega_i,\mu_i,\Sigma_i):i=1,\cdots,r\}$
for $d\in\{20,30,40\}$, and $20$ random instances for $d\in\{50,60\}$,
because of the relatively more computational time for the latter case.
For each instance, $10000$ samples are generated.
To generate the weights $\omega_1,\ldots, \omega_r$,
we first use the MATLAB function \texttt{randi} to generate a random
$10000-$dimensional integer vector of entries from $[r]$,
then the occurring frequency of $i$ in $[r]$ is used as the weight $\omega_i$.
For each diagonal covariance matrix $\Sigma_i$,
its diagonal vector is set to be the square of a
random vector generated by the MATLAB function \texttt{randn}.
Each sample is generated from one of $r$ component Gaussian distributions,
so they are naturally separated into $r$ groups. Algorithm \ref{algo:no opt}
and the EM algorithm are applied to fit the Gaussian mixture model
to the $10000$ samples for each instance. For each sample,
we calculate the likelihood of the sample to
each component Gaussian distribution in the estimated Gaussian mixture model.
A sample is classified to the $i$th group if its likelihood for the
$i$th component is maximum. The classification accuracy is the rate that
samples are classified to the correct group. In Table~\ref{simu-result}, for each pair $(d,r)$,
we report the accuracy of Algorithm~\ref{algo:no opt}
in the first row and the accuracy of the EM algorithm in the second row.
As one can see, Algorithm~\ref{algo:no opt} performs better than EM algorithm,
and its accuracy isn't affected when the dimensions and ranks increase.
Indeed, as the difference between the dimension $d$ and the rank $r$ increases,
Algorithm~\ref{algo:no opt} becomes more and more accurate.
This is opposite to the EM algorithm. The reason is that
the difference between the number of rows and
the number of columns of $A_{ij}[\cF]$ in \reff{eq:A,b}
increases as $d-r$ becomes bigger,
which makes Algorithm~\ref{algo:no opt} more robust.

{\footnotesize
\begin{table}[htb]
  \centering
  \caption{Comparison between Algorithm \ref{algo:no opt} and EM for simulations}
	\label{simu-result}
    \begin{tabular}{ccccccc}
    \toprule
     & & \multicolumn{2}{c}{$\text{accuracy}$} & & \multicolumn{2}{c}{$\text{time}$} \\
    \cmidrule{3-4}\cmidrule{6-7 }
    $d$ & $r$ & Algorithm \ref{algo:no opt} & EM & & Algorithm \ref{algo:no opt} & EM\\
    \midrule
    \multirow{4}{*}{20} & 3 & 0.9861 & 0.9763 & & 0.8745 & 0.1649\\
     \cmidrule{3-4}\cmidrule{6-7}
     & 5 & 0.9740 & 0.9400 & & 2.3476 & 0.3852\\
     \cmidrule{3-4}\cmidrule{6-7}
     & 7 & 0.9659  & 0.9252 & & 3.4352 & 0.6777\\
    \midrule
    \multirow{4}{*}{30} & 4 &  0.9965 & 0.9684 & & 4.5266 & 0.2959\\
     \cmidrule{3-4}\cmidrule{6-7}
     & 8 & 0.9923 & 0.9277 & & 8.5494 & 0.8525\\
     \cmidrule{3-4}\cmidrule{6-7}
     & 11 & 0.9895 & 0.9219 & & 17.2091 & 1.4106\\
    \midrule
    \multirow{4}{*}{40} & 6 & 0.9990 & 0.9117 & & 18.9160 & 0.6273\\
   \cmidrule{3-4}\cmidrule{6-7}
     & 10 & 0.9981 & 0.8931 & & 28.4161 & 1.2617\\
     \cmidrule{3-4}\cmidrule{6-7}
     & 15 & 0.9971 & 0.9111 & & 69.8013 & 2.0627\\
     \midrule
     \multirow{4}{*}{50} & 7 & 0.9997 & 0.8997 & & 40.6810 & 0.8314\\
     \cmidrule{3-4}\cmidrule{6-7}
     & 13 & 0.9995 & 0.9073 & & 104.7927 & 1.7867\\
     \cmidrule{3-4}\cmidrule{6-7}
     & 18 & 0.9993 & 0.9038 & & 163.2711 & 2.6862\\
     \midrule
     \multirow{4}{*}{60} & 8 & 0.9999 & 0.8874 & & 93.9836 & 1.1266\\
     \cmidrule{3-4}\cmidrule{6-7}
     & 15 & 0.9998 & 0.8632 & & 234.0331 & 2.6435\\
     \cmidrule{3-4}\cmidrule{6-7}
     & 22 & 0.9995 & 0.8929 & & 497.9371 & 3.5527\\
     \bottomrule
    \end{tabular}
\end{table}
}

Last, we apply Algorithm \ref{algo:no opt} to do texture classifications.
We select $8$ textured images of $512\times 512$ pixels from the VisTex database.
We use the MATLAB function \texttt{rgb2gray} to convert them into grayscale version
since we only need their structure and texture information.
Each image is divided into subimages of $32\times 32$ pixels.
We perform the discrete cosine transformation(DCT) on each block of size
$16\times 16$ pixels with overlap of $8$ pixels. Each component of 'Wavelet-like' DCT feature
is the sum of the absolute value of the DCT coefficients in the corresponding sub-block.
So the dimension $d$ of the feature vector extracted from each subimage is $13$.
We use blocks extracted from the first $160$ subimages for training and those from the rest
$96$ subimages for testing.
We refer to \cite{PERMUTER2006695} for more details.
For each image, we apply Algorithm \ref{algo:no opt}
and the EM algorithm to fit a Gaussian mixture model to the image.
We choose the number of components $r$ according to Remark~\ref{remark:rank}.
To classify the test data, we follow the Bayes decision rule that
assigns each block to the texture which maximizes the posteriori probability,
where we assume a uniform prior over all classes \cite{dixit2011adapted}.
The classification accuracy is the rate that a subimage is correctly
classified, which is shown in Table~\ref{table: texture}.
Algorithm~\ref{algo:no opt} outperforms the classical EM algorithm
for the accuracy rates for six of the images.

%%%%%%%%%%%%%%%%%%%%%%%%%%%%%%%%%%%%%%%%%%%%%%%%%
\begin{figure}[htb]
  \caption{Textures from VisTex}
  \label{textures}
  \centering
  \begin{tabular}{cccc}
  \begin{subfigure}{.2\textwidth}
    \centering
    \includegraphics[width=.8\linewidth]{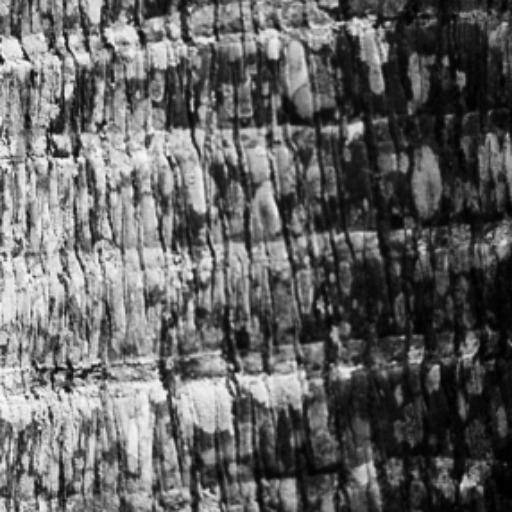}
    \caption*{Bark.0000}
  \end{subfigure}&
  \hfill
  \begin{subfigure}{.2\textwidth}
    \centering
    \includegraphics[width=.8\linewidth]{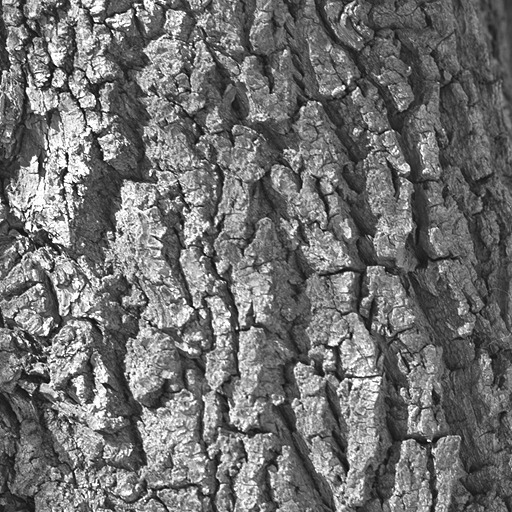}
    \caption*{Bark.0009}
  \end{subfigure}&
  \hfill
  \begin{subfigure}{.2\textwidth}
      \centering
      \includegraphics[width=.8\linewidth]{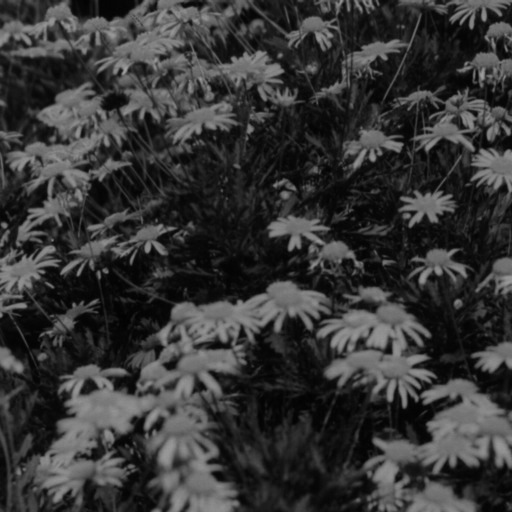}
      \caption*{Flowers.0001}
  \end{subfigure}&
  \hfill
  \begin{subfigure}{.2\textwidth}
      \centering
      \includegraphics[width=.8\linewidth]{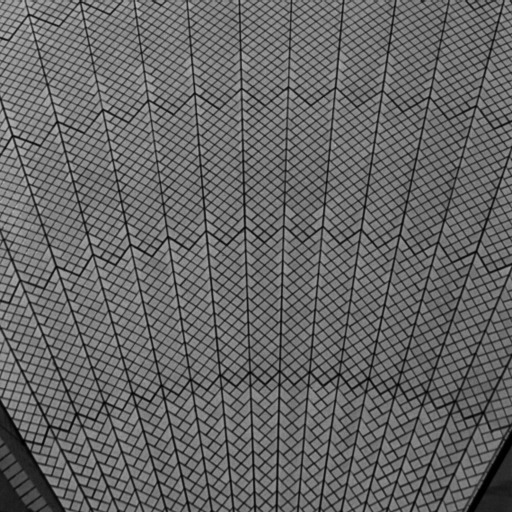}
      \caption*{Tile.0000}
  \end{subfigure}\\
  \begin{subfigure}{.2\textwidth}
      \centering
      \includegraphics[width=.8\linewidth]{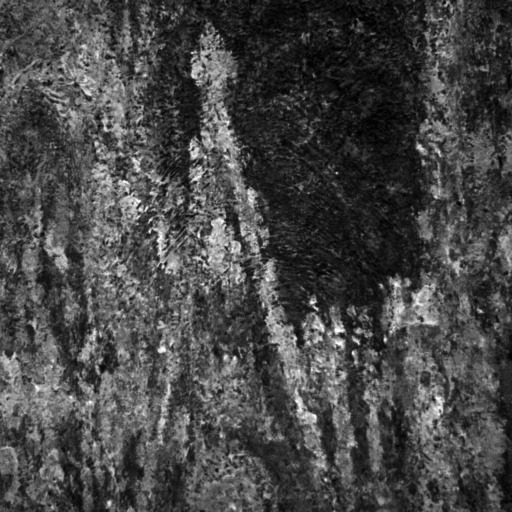}
      \caption*{Paintings.11.0001}
  \end{subfigure}&
  \hfill
  \begin{subfigure}{.2\textwidth}
      \centering
     \includegraphics[width=.8\linewidth]{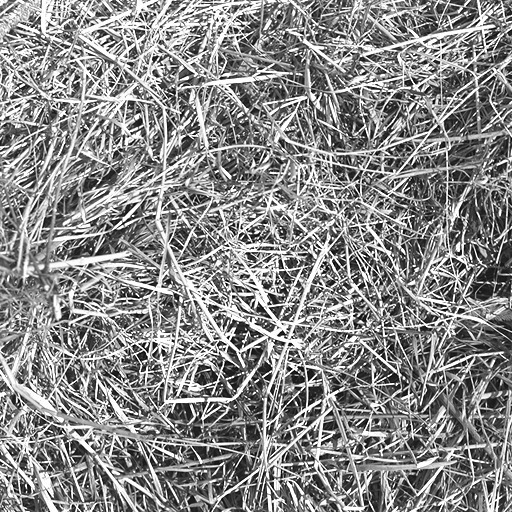}
      \caption*{Grass.0001}
  \end{subfigure}&
  \hfill
  \begin{subfigure}{.2\textwidth}
      \centering
      \includegraphics[width=.8\linewidth]{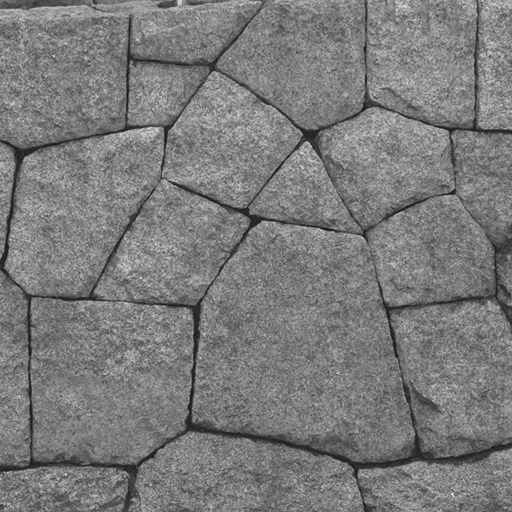}
      \caption*{Brick.0004}
  \end{subfigure}&
  \hfill
  \begin{subfigure}{.2\textwidth}
      \centering
      \includegraphics[width=.8\linewidth]{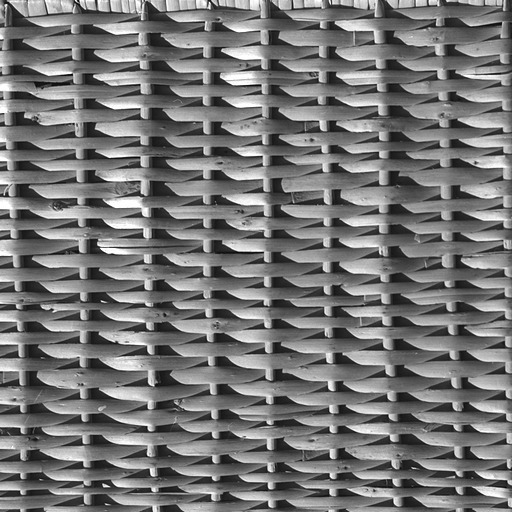}
      \caption*{Fabric.0013}
  \end{subfigure}\\
  \end{tabular}
\end{figure}
\setcounter{figure}{1}

%%%%%%%%%%%%%%%%%%%%%%%%%%%%%%%%%%%%%%%%%%%%%%%%%%%%%%%%

\begin{table}[htb]
	\centering
	\caption{Classification results on $8$ textures}
	\label{table: texture}
	\begin{tabular}{ccc}
	\toprule
	Accuracy & Algorithm \ref{algo:no opt} & EM\\
	\midrule
	Bark.0000 & 0.5376 & 0.8413\\
	Bark.0009 & 0.5107 & 0.7150\\
	Flowers.0001 & 0.8137 & 0.6315\\
	Tile.0000 & 0.8219 & 0.7239\\
	Paintings.11.0001 & 0.8047 & 0.7350\\
	Grass.0001 & 0.9841 & 0.9068\\
	Brick.0004 & 0.9406 & 0.8854\\
	Fabric.0013 & 0.9220 & 0.9048\\
	\bottomrule
	\end{tabular}
\end{table}

\section{Conclusions and Future Work}
\label{sc:con}

This paper gives a new algorithm for learning Gaussian mixture models
with diagonal covariance matrices.
We first give a method for computing incomplete symmetric tensor decompositions.
It is based on the usage of generating polynomials.
The method is described in Algorithm~\ref{algo:iSTD}.
When the input subtensor has small errors,
we can similarly compute the incomplete symmetric tensor approximation,
which is given by Algorithm~\ref{algo:decomposition}.
We have shown in Theorem~\ref{thm:decom error} that
if the input subtensor is sufficiently close to a low rank one,
the produced tensor approximation is highly accurate.
Then unknown parameters for Gaussian mixture models can be recovered
by using the incomplete tensor decomposition method.
It is described in Algorithm~\ref{algo:no opt}.
When the estimations of $M_1$ and $M_3$
are accurate, the parameters recovered by Algorithm \ref{algo:no opt} are also accurate.
The computational simulations demonstrate the good performance of the proposed method.

The proposed methods deals with the case that
the number of Gaussian components is less than one half of the dimension.
How do we compute incomplete symmetric tensor decompositions
when the set $\Omega$ is not like \reff{eq:label set}?
How can we learn parameters for Gaussian mixture models with more components?
How can we do that when the covariance matrices are not diagonal?
They are important and interesting topics for future research work.

\end{document}